\LetLtxMacro\todonotestodo\todo
\renewcommand{\todo}[2][]{\todonotestodo[#1]{TODO: {#2}}}
\newtheorem{theorem}{Theorem}
\newtheorem*{rep@theorem}{\rep@title}
\newcommand{\newreptheorem}[2]{%
\newenvironment{rep#1}[1]{%
 \def\rep@title{#2 \ref{##1}}%
 \begin{rep@theorem}}%
 {\end{rep@theorem}}}
\newtheorem{theorem:intro}{Theorem}
\newtheorem{lemma}{Lemma}[section]
\newtheorem{proposition}[lemma]{Proposition}
\newtheorem{corollary}[lemma]{Corollary}
\theoremstyle{definition}
\newtheorem{remark}[lemma]{Remark}
\newtheorem*{claim*}{Claim}
\newtheorem*{theorem*}{Theorem}
\newtheorem*{corollary*}{Corollary}
\newtheorem*{lemma*}{Lemma}
\newcommand{\Z}{\mathbb{Z}}
\newcommand{\N}{\mathbb{N}}
\DeclareMathOperator{\spec}{spec}
\DeclareMathOperator{\Reg}{Reg}
\DeclareMathOperator{\im}{im}
\DeclareMathOperator{\Pic}{Pic}
\newcommand{\vmax}{v\text{-}\max}
\title{On the distribution of prime divisors in class groups of affine monoid algebras}
\author{Victor Fadinger}
\address[Victor Fadinger]{Institut für Mathematik und Wissenschaftliches Rechnen\\Karl-Franzens-Universität Graz\\
  Heinrichstraße 36\\8010 Graz\\Austria}
\email{\href{mailto:victor.fadinger@uni-graz.at}{victor.fadinger@uni-graz.at}}
\thanks{V.~Fadinger is supported by the Austrian Science Fund (FWF): W1230}
\author{Daniel Windisch}
\address[Daniel Windisch]{Institut für Analysis und Zahlentheorie\\Technische Universität Graz\\
  Kopernikusgasse 24/II\\8010 Graz\\Austria}
\email{\href{mailto:dwindisch@math.tugraz.at}{dwindisch@math.tugraz.at}}
\thanks{D.~Windisch is supported by the Austrian Science Fund (FWF): P~30934}
\thanks{MSC 2020: 13A15, 13C20, 13F05, 20M14, 20M25.}
\begin{document}

\begin{abstract}
We investigate the class groups and the distribution of prime divisors in affine monoid algebras over fields and thereby extend the result of Kainrath \cite{Kain} that every finitely generated integral algebra of Krull dimension at least $2$ over an infinite field has infinitely many prime divisors in all classes.
\end{abstract}

\maketitle

\section{Introduction}

The study of class groups and of the distribution of prime divisors in the classes is an old topic in ring theory. In general, class groups and the distribution of prime divisors can more or less be arbitrary. Indeed, for every abelian group $G$ and every subset $G_0 \subseteq G$, which generates $G$ as a monoid, there is a Dedekind domain $R$ whose class group is isomorphic to $G$ and $G_0$ corresponds to the set of classes containing prime divisors (this is Claborn's Realization Theorem, see \cite[Theorem 3.7.8]{GHK}). It is classical that rings of integers in algebraic number fields and holomorphy rings in algebraic function fields are Dedekind domains with finite class group and infinitely many prime divisors in all classes. More recent realization results show that every abelian group is isomorphic to the class group of a simple Dedekind domain (\cite{Sm}) and isomorphic to the class group of a ring of Krull type that is not Krull (\cite{Chang-Krulltype}).  Furthermore, monoid algebras that are Krull and cluster algebras that are Krull do have infinitely many prime divisors in all classes (\cite{Krull} and \cite[Theorem A]{Ga-La-Sm19a}). \\
Concerning Noetherian domains, that are not necessarily Krull (equivalently, not integrally closed) we mention a result by Kainrath \cite{Kain}. He proved that for an infinite field $K$, every finitely generated integral $K$-algebra with quotient field seperable over $K$ and Krull dimension at least $2$ has infinitely many prime divisors in each class. So in particular, this holds true for finitely generated monoid algebras of this type. \\
The distribution of prime divisors is still open in the following cases:
\begin{enumerate}
\item finitely generated algebras over arbitrary fields of Krull dimension one and
\item finitely generated algebras over finite fields of Krull dimension at least two.
\end{enumerate}
In the present paper, we study these two cases when the algebra is a monoid algebra. More precisely, in the one-dimensional case we obtain the following sufficient condition.

\begin{theorem:intro}
Let $K$ be a field and let $S$ be a numerical monoid with Frobenius number $\mathsf{f}(S)=\max(\N_0\setminus S)$. Then $K[S]$ has at least one prime divisor (resp. infinitely many) in all classes if for all $a_0,\ldots ,a_{\mathsf{f}(S)}\in K$ with $a_0\neq 0$ there exists at least one irreducible polynomial (resp. infinitely many) in $K[X]$ whose coefficient at the monomial $X^i$ equals $a_i$ for all $i\in[0,\mathsf{f}(S)]$.
\end{theorem:intro}

\noindent
To achieve this, we in addition give descriptions of class groups of such monoid algebras.
In the higher dimensional case, we have the following theorem for arbitrary fields.

\begin{theorem:intro}
Let $K$ be a field and let $S$ be an affine monoid with factorial complete integral closure and quotient group of rank at least $2$. Then $K[S]$ has infinitely many prime divisors in each divisor class.
\end{theorem:intro}

\section{Preliminaries} \label{section:preliminaries}

\noindent
\textbf{Monoids.} In our setting, a \textit{monoid} $S$ is an additively written commutative cancellative semigroup with identity element. We denote its quotient group by $\mathsf{q}(S)$. \\
An \textit{affine monoid} is finitely generated monoid with torsion-free quotient group. Equivalently, it is isomorphic to a finitely generated submonoid of $(\Z^m,+)$ for some $m \in \N$. A \textit{numerical monoid} is a submonoid $S$ of $(\N_0,+)$ such that $\N_0 \setminus S$ is finite and $\mathsf{f}(S) = \max(\N_0\setminus S)$ denotes its \textit{Frobenius number}. Note that the affine monoids with quotient group $\Z$ are exactly the numerical monoids or $\Z$ itself (up to isomorphism).\\
For a monoid $S$ with quotient group $G = \mathsf{q}(S)$, we denote by $\widehat{S} = \{x \in G \mid \exists c \in S \ \forall n \in \N \ c + nx \in S \}$. This is again a monoid with quotient group $G$ and called the \textit{complete integral closure} of $S$. Moreover, we call $\mathfrak{f}_S = \{g \in G \mid g + \widehat{S} \subseteq S \}$ the \textit{conductor} of $S$, which is an ideal of $S$ and of $\widehat{S}$. \\

\noindent
\textbf{Monoid algebras.} For a domain $D$ and an additively written monoid $S$, let $D[S]$ denote the monoid algebra of $D$ over $S$. It is well known that the monoid algebra $D[S]$ is a domain if and only if the monoid $S$ is torsion-free \cite[Theorem 8.1]{Gil84}. Therefore, the finitely generated monoid algebras that are domains are exactly the affine monoid algebras. \\
The Krull dimension of a monoid algebra $D[S]$ as above is the same as that of $D[\bf X]$, where $\bf X$ is a set of indeterminates over $D$ with cardinality equal to the torsion-free rank of the quotient group of $S$ \cite[Theorems 17.1 \& 21.4]{Gil84}. In particular, if $K[S]$ is a finitely generated monoid algebra over a field $K$, then its Krull dimension equals $1$ if and only if $S$ is isomorphic to $\Z$ or to a numerical monoid.\\
For subsets $P\subseteq D$ and $H\subseteq S$, we denote by $P[H]=\{\sum_{i=1}^{n}p_iX^{h_i}\mid n\in\N, p_i\in P, h_i\in H\}\subseteq D[S]$ all elements of $D[S]$ with coefficients in $P$ and exponents in $H$.\\
Let $K$ be the quotient field of $D$ and let $G$ be the quotient group of $S$. For an element $f = \sum_{i=1}^n d_i X^{s_i} \in K[G]$ with $d_i \in K$ and $s_i \in G$ we denote by $A_f$ resp. $E_f$ the fractional ideal of $D$ resp. $S$ generated by $d_1,\ldots,d_n$ resp. $s_1,\ldots,s_n$.\\

\noindent
\textbf{$v$-ideals.} Let $S$ be a monoid with quotient group $G$. For a subset $X \subseteq G$, we set $X^{-1} =(S:X)= \{g \in G \mid g+X \subseteq S\}$ and $X_v = (X^{-1})^{-1}$. A subset $I \subseteq G$ is called a \textit{fractional $v$-ideal} if there exists $c \in S$ such that $c + I \subseteq S$ and $I = I_v$ and it is called a (\textit{integral}) $v$-\textit{ideal} if $I \subseteq S$ and $I = I_v$. A fractional $v$-ideal $I$ is called $v$-\textit{invertible} if there exists a fractional $v$-ideal $J$ such that $(IJ)_v = S$. A $v$-invertible fractional $v$-ideal $I \subseteq S$ is called a \textit{$v$-invertible $v$-ideal}. We denote the set of all $v$-invertible $v$-ideals by $\mathcal{I}_v^*(S)$. The set of all $v$-invertible fractional $v$-ideals with multiplicative defined as $I \cdot_v J = (IJ)_v$ forms a group with identity element $S$.
It is a fact that every (fractional) $v$-ideal of $S$ is indeed a (fractional) ideal of $S$ \cite[Section 11.4]{HK98}. \\
Replacing the monoid $S$ by a domain $D$, the terminology and notation concerning $v$-ideals above is the same. \\
A domain $D$ is called a \textit{Mori domain} if it satisfies the ascending chain condition on $v$-ideals.\\

\noindent
\textbf{Class groups.} By $\mathcal{C}_v(S)$ we denote the $v$-class group of $S$ which is the factor group of the group of $v$-invertible fractional $v$-ideals of $S$ modulo the subgroup of non-empty principal fractional ideals of $S$. It is known that $\mathcal{C}_v(S)$ is isomorphic to the quotient of the monoid of $v$-invertible (integral) $v$-ideals modulo the submonoid of non-empty principal (integral) ideals. \\
If $I$ is a $v$-invertible fractional $v$-ideal of $S$, we denote by $[I] \in \mathcal{C}_v(S)$ its $v$-class and refer to it as the class of $I$. Moreover, we denote by $\mathfrak{X}(S)$ the set of height-one prime ideals of $S$. In the literature, the $v$-invertible elements of $\mathfrak X(S)$ are also often called prime divisors of $S$.\\
Replacing the monoid $S$ by a domain $D$, the terminology and notation concerning class groups and prime divisors above is the same. For further information, see \cite[Section 2.10]{GHK}.\\

\noindent
\textbf{The monoid of regular elements.} For a domain $D$ we define its \textit{monoid of regular elements} as $\Reg(D) = \{a \in D \mid \text{ if } z \in \widehat{D} \text{ and } za \in D \text{, then } z \in D\}$. For a Mori domain $D$, one can easily see that $\Reg(D)$ is a saturated submonoid of $(\widehat D \setminus \{0\}, \cdot)$ as follows: Let $a,b \in \Reg(D)$ and $z \in \widehat D \setminus \{0\}$ such that $az = b$. Since $b \in D \setminus \{0\}$, it follows by the definition of $\Reg(D)$ that $z \in D$. Since $\Reg(D)$ is a divisor closed submonoid of $D \setminus \{0\}$ by \cite[Proposition 2.3.10.2]{GHK} we have that $z \in \Reg(D)$.\\
For a domain $D$ with complete integral closure $\widehat D$, we denote by $\mathcal{I}_\mathfrak{f}(D)$ the set of all $v$-invertible $v$-ideals $I$ with $I \cap \Reg(D) \neq \emptyset$ and by $\mathcal{I}_\mathfrak{f}(\widehat D)$ the set of $v$-invertible $v$-ideals $I$ of $\widehat{D}$ with $I \cap \Reg(D) \neq \emptyset$.\\
By \cite[Theorem 2.10.9]{GHK}, for a Mori domain with $\mathfrak f= (D:\widehat D) \neq (0)$ we have isomorphisms
\begin{align*}
\hspace{3cm} \varphi: \mathcal{I}_\mathfrak{f}(\widehat{D}) &\to \mathcal{I}_\mathfrak{f}({D}) \hspace{2.5cm} &\psi: \mathcal{I}_\mathfrak{f}({D}) &\to \mathcal{I}_v^*(\Reg(D)) \hspace{3cm} \\
I &\mapsto I \cap D, &{} I &\mapsto I \cap \Reg(D).
\end{align*}
The isomorphism $\psi$ induces an isomorphism $\tilde{\psi}: \mathcal{C}_v(D) \to \mathcal{C}_v(\Reg(D))$. In particular, for every $g \in \mathcal{C}_v(D)$ there is a bijective map from the prime divisor in $g$ to the prime divisors in $\tilde{\psi}(g)$ given by intersection with $\Reg(D)$.\\

\noindent
\textbf{Valuations.} In Proposition \ref{proposition:use-kainrath}, we make use of the theory of valuations on groups. If $S$ is a monoid and $P$ is a prime ideal of $S$ such that the localization $S_P$ is a valuation monoid, then we denote by $\mathsf{v}_P$ its valuation induced on the quotient group of $S$ and call it the $P$-adic valuation. The interested reader is referred to \cite[§ 16]{Gil84}. \\

\section{The one-dimensional case} \label{section:one-dim}

\begin{lemma}\label{lemma:conductor_general}
Let $D$ be an integral domain and let $S$ be a torsion-free monoid. Then the conductor $\mathfrak{f}_{D[S]}$ equals $\mathfrak{f}_D[\mathfrak{f}_S] = \{\sum_{i=1}^{n}p_iX^{h_i}\mid n\in\N, p_i\in \mathfrak{f}_D, h_i\in \mathfrak{f}_S\}$.
\end{lemma}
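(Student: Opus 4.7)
The plan is to prove the two set-theoretic inclusions $\mathfrak{f}_D[\mathfrak{f}_S] \subseteq \mathfrak{f}_{D[S]}$ and $\mathfrak{f}_{D[S]} \subseteq \mathfrak{f}_D[\mathfrak{f}_S]$ separately. The starting point is the identification $\widehat{D[S]} = \widehat{D}[\widehat{S}]$ for torsion-free $S$, a classical result of Gilmer that can be cited from his monograph on commutative semigroup rings. Once this is in hand, both inclusions reduce to bookkeeping with coefficients and exponents.

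For the inclusion $\mathfrak{f}_D[\mathfrak{f}_S] \subseteq \mathfrak{f}_{D[S]}$, I take an arbitrary $f = \sum_i p_i X^{h_i}$ with $p_i \in \mathfrak{f}_D$ and $h_i \in \mathfrak{f}_S$ and an arbitrary $g = \sum_j q_j X^{k_j} \in \widehat{D}[\widehat{S}] = \widehat{D[S]}$. Distributing and collecting, every coefficient appearing in $fg$ is a sum of products $p_i q_j \in \mathfrak{f}_D \cdot \widehat{D} \subseteq D$ (since $\mathfrak{f}_D$ is an ideal of $\widehat{D}$), and every exponent is of the form $h_i + k_j \in \mathfrak{f}_S + \widehat{S} \subseteq S$ (since $\mathfrak{f}_S$ is an ideal of $\widehat{S}$). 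Hence $fg \in D[S]$, and since $g$ was arbitrary, $f \in \mathfrak{f}_{D[S]}$.

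For the inclusion $\mathfrak{f}_{D[S]} \subseteq \mathfrak{f}_D[\mathfrak{f}_S]$, I take $f \in \mathfrak{f}_{D[S]}$ — which automatically lies in $D[S]$ since $1 \in \widehat{D[S]}$ — and write it in its canonical form $f = \sum_{i=1}^n a_i X^{s_i}$ with pairwise distinct $s_i \in S$ and nonzero $a_i \in D$. I will test the defining property $f \cdot \widehat{D[S]} \subseteq D[S]$ against two types of elements. First, for any $d \in \widehat{D}$, one has $d \in \widehat{D[S]}$, so $fd = \sum_i (a_i d) X^{s_i} \in D[S]$ forces $a_i d \in D$ for every $i$; varying $d$ gives $a_i \widehat{D} \subseteq D$, so $a_i \in \mathfrak{f}_D$. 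Second, for any $h \in \widehat{S}$ one has $X^h \in \widehat{D[S]}$ (any witness $c \in S$ of $h \in \widehat{S}$ makes $X^c$ a witness for $X^h$), so $fX^h = \sum_i a_i X^{s_i+h} \in D[S]$; since all $a_i$ are nonzero and the $s_i + h$ remain pairwise distinct, this forces $s_i + h \in S$ for every $i$, hence $s_i \in \mathfrak{f}_S$.

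The only delicate point is the use of Gilmer's description of $\widehat{D[S]}$ in the $\supseteq$ direction: without it, one cannot exclude exotic almost integral elements of $D[S]$ that are not supported on $\widehat{D}$ and $\widehat{S}$. The reverse $\subseteq$ direction is elementary and does not rely on the full strength of Gilmer's theorem, only on the obvious fact that constants from $\widehat{D}$ and monomials $X^h$ with $h \in \widehat{S}$ are almost integral over $D[S]$.
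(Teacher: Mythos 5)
Your proof is correct and follows essentially the same route as the paper's: the hard inclusion $\mathfrak{f}_{D[S]}\subseteq\mathfrak{f}_D[\mathfrak{f}_S]$ is obtained exactly as there, by testing $f$ against the constants $d\in\widehat{D}$ and the monomials $X^h$ with $h\in\widehat{S}$ and reading off coefficients and exponents of the canonical form. The only difference is one of care: the paper dismisses the inclusion $\mathfrak{f}_D[\mathfrak{f}_S]\subseteq\mathfrak{f}_{D[S]}$ as obvious, whereas you correctly point out that this direction is where the nontrivial input $\widehat{D[S]}\subseteq\widehat{D}[\widehat{S}]$ is actually needed, while the other direction uses only the elementary containment of $\widehat{D}$ and the monomials from $\widehat{S}$ in $\widehat{D[S]}$.
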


\begin{proof}
$\mathfrak{f}_D[\mathfrak{f}_S] \subseteq \mathfrak{f}_{D[S]}$ is obvious. Now, let $g \in \mathfrak{f}_{D[S]}$ and write $g = \sum_{i = 1}^n d_i X^{s_i}$ with $d_i \in D$ and $s_i \in S$ for all $i \in [1,n]$. Clearly, we have $X^\alpha g = \sum_{i =1}^n d_i X^{s_i + \alpha} \in D[S]$ for all $\alpha \in \widehat{S}$. Thus, $s_i + \alpha \in S$ for all $i \in [1,n]$ and $\alpha \in \widehat{S}$. It follows that $s_i \in \mathfrak{f}_S$ for all $i \in [1,n]$.  In the same way, we have $d g = \sum_{i =1}^n (dd_i) X^{s_i} \in D[S]$ for all $d \in \widehat{D}$. Thus, $ dd_i \in D$ for all $i \in [1,n]$ and $d \in \widehat{D}$. It follows that $d_i \in \mathfrak{f}_D$ for all $i \in [1,n]$. 
\end{proof}

\begin{lemma}\label{lemma:conductor}
Let $K$ be a field, let $S$ be a numerical monoid, and let $\mathfrak{f}_S = (S:\widehat{S})$ resp. $\mathfrak{f} = (K[S]: \widehat{K[S]})$ denote the conductor of $S$ resp. $K[S]$. 
\begin{enumerate}
\item $\widehat{S}= \N_0$ and $\widehat{K[S]} = K[X]$.
\item $\mathfrak{f}_S = [\mathsf{f}(S)+1, \infty)$ and $\mathfrak{f} = K[\mathfrak{f}_S]$, where $\mathsf{f}(S) = \max (\N_0 \setminus S)$.
\item $\Reg(K[S]) = \{f \in K[S] \mid f(0) \neq 0 \}$.
\end{enumerate}
\end{lemma}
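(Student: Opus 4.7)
The plan is to verify the three claims in order, using the stated definitions of $\widehat{S}$, $\mathfrak{f}_S$, and $\Reg$, and invoking Lemma~\ref{lemma:conductor_general} for the description of $\mathfrak{f}$.

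For (i), to show $\widehat{S}=\N_0$: the inclusion $\N_0\subseteq\widehat{S}$ follows from the finiteness of $\N_0\setminus S$, since for any $x\in\N_0$ and any $c\in S$ with $c\geq\mathsf{f}(S)+1$, all shifts $c+nx$ are at least $\mathsf{f}(S)+1$ and therefore lie in $S$. For the reverse inclusion, $\widehat{S}\subseteq\mathsf{q}(S)=\Z$, and no negative $x$ can lie in $\widehat{S}$ because $c+nx\to-\infty$ eventually leaves $S\subseteq\N_0$. To get $\widehat{K[S]}=K[X]$, note that $K[S]\subseteq K[\N_0]=K[X]$ and $K[X]$ is a PID, hence completely integrally closed, so $\widehat{K[S]}\subseteq\widehat{K[X]}=K[X]$. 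Conversely, picking $c=X^{\mathsf{f}(S)+1}\in K[S]$, we have $cX^n\in K[S]$ for all $n\in\N$, whence $X\in\widehat{K[S]}$ and thus $K[X]\subseteq\widehat{K[S]}$.

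For (ii), the formula $\mathfrak{f}_S=[\mathsf{f}(S)+1,\infty)$ is immediate from the definition: integers $g<0$ are excluded since $g\notin S$, integers $0\leq g\leq\mathsf{f}(S)$ fail because $g+(\mathsf{f}(S)-g)=\mathsf{f}(S)\notin S$, and any $g\geq\mathsf{f}(S)+1$ satisfies $g+\N_0\subseteq S$. By Lemma~\ref{lemma:conductor_general}, $\mathfrak{f}=\mathfrak{f}_K[\mathfrak{f}_S]$, and since $K$ is a field $\widehat{K}=K$ and hence $\mathfrak{f}_K=K$, giving $\mathfrak{f}=K[\mathfrak{f}_S]$.

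For (iii), I handle the two inclusions separately. If $f=\sum_{s\in S}a_s X^s\in K[S]$ has $a_0\neq 0$ and $z=\sum_{l\in\N_0}b_l X^l\in K[X]$ satisfies $fz\in K[S]$, assume toward contradiction that $z\notin K[S]$ and let $j_0$ be minimal with $b_{j_0}\neq 0$ and $j_0\notin S$. The coefficient of $X^{j_0}$ in $fz$ receives $a_0 b_{j_0}$ from $s=0$; any other contribution $a_s b_l$ has $s\in S\setminus\{0\}$, $l<j_0$ and $b_l\neq 0$, which forces $l\in S$ by minimality of $j_0$, and then $s+l=j_0\in S$ by closure of $S$ under addition, contradicting $j_0\notin S$. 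Hence the coefficient equals $a_0 b_{j_0}\neq 0$, contradicting $fz\in K[S]$. Conversely, if $f(0)=0$, the minimal exponent $t$ of $f$ satisfies $t\geq 1$, so $z=X^{\mathsf{f}(S)}\in K[X]\setminus K[S]$ gives $fz\in K[\mathfrak{f}_S]\subseteq K[S]$ by (ii), whence $f\notin\Reg(K[S])$. The only subtle point is this minimality argument in (iii), where closure of $S$ under addition is essential; the remaining bookkeeping is routine.
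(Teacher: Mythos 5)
Your proof is correct, but for part (3) it takes a genuinely different route from the paper. For (1) you verify $\widehat{K[S]}=K[X]$ by hand (squeezing $\widehat{K[S]}$ between $K[X]$, which is completely integrally closed, and the subring generated by $K[S]$ and $X$), whereas the paper just quotes the general identity $\widehat{K[S]}=K[\widehat S]$ from Gilmer; part (2) is the same in both. The real divergence is (3): the paper identifies $\mathfrak X(K[S])$ via \cite{FW}, observes that $K[S\setminus\{0\}]$ is the unique height-one prime containing the conductor, and then invokes \cite[Theorem 2.10.9.2]{GHK} to get $\Reg(K[S])=K[S]\setminus K[S\setminus\{0\}]$. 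You instead argue directly from the definition of $\Reg$: the forward inclusion by the minimal-offending-exponent computation (where, as you note, closure of $S$ under addition is exactly what makes the cross terms vanish, so the coefficient of $X^{j_0}$ in $fz$ is $a_0b_{j_0}\neq 0$), and the reverse inclusion by exhibiting the witness $z=X^{\mathsf f(S)}$. Your version is more elementary and self-contained; the paper's version is shorter given the cited machinery and has the side benefit of setting up the description of the height-one primes of $K[S]$ that is reused elsewhere. The only cosmetic caveat (shared with the paper's statement) is the degenerate case $S=\N_0$, where $\mathsf f(S)$ is undefined and (3) fails as stated; for proper numerical monoids your argument is complete.
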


\begin{proof}
(1) The first equality is clear and the second follows from $\widehat{K[S]} = K[\widehat{S}]$ (see the proof of \cite[Theorem 12.5]{Gil84}). \\
(2) The first equality is trivial. The second one follows from Lemma \ref{lemma:conductor_general}.
(3) We have
\begin{align*}
\mathfrak{X}(K[S]) = \{fK[\Z] \cap K[S] \mid f \in K[\Z] \text{ irreducible} \} \cup \{K[S\setminus \{0\}] \}
\end{align*}
by \cite[Lemma 4.7]{FW} and the fact that $S \setminus \{0\}$ is the only non-empty prime ideal of $S$. But the only prime ideal among these containing $\mathfrak{f} = K[\mathfrak{f}_S]$ is $K[S \setminus \{0\}]$, whence by \cite[Theorem 2.10.9.2]{GHK} and the fact that $\vmax(K[S]) = \mathfrak{X}(K[S])$ (note that $K[S]$ is one-dimensional Noetherian by \cite[Theorems 7.7, 17.1, and 21.4]{Gil84}) we have
\begin{align*}
\Reg(K[S]) &= K[S] \setminus K[S \setminus \{0\}] \\
			& = \{f \in K[S] \mid f(0) \neq 0 \}.
\end{align*}
\end{proof}

Let $\mathcal{C} = \{1+ \sum_{i =1}^{\mathsf{f}(S)} k_iX^i \mid k_i \in K\} \subseteq K[S]$ and endow $\mathcal{C}$ with the group operation $\otimes$ defined by multiplying two polynomials in $\mathcal{C}$ and setting the coefficients of monomials with exponents $>m$ in the product equal to $0$, that is, $(1+ \sum_{i =1}^{\mathsf{f}(S)} k_iX^i) \otimes (1+ \sum_{i =1}^{\mathsf{f}(S)} k_iX^i) = 1+ \sum_{i =1}^{\mathsf{f}(S)} (\sum_{a+b = i} k_al_b) X^i$ with $k_0 = l_0 = 1$.

\begin{lemma}\label{lemma:general}
Let $K$ be a field and let $S$ be the numerical monoid
. Denote $\mathsf{R} = \Reg(K[S])$ and $\Lambda=\N_0\setminus S$.
\begin{enumerate}
\item $\{f \in K[X] \mid fK[X]  \in \mathcal{I}_\mathfrak{f}(K[X])\} = \{f \in K[X] \mid f(0) \neq 0 \}$.
\item For $f,g \in K[X]$ with $fK[X], gK[X] \in \mathcal{I}_\mathfrak{f}(K[X])$ we have $[fK[X] \cap \mathsf{R}] = [gK[X] \cap \mathsf{R}]$ if and only if $\frac{f}{g} \in \mathsf{q(R)} $.
\end{enumerate}
\end{lemma}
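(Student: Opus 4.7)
For (1), the inclusion $\subseteq$ is immediate: any $fg \in fK[X]\cap\mathsf{R}$ satisfies $(fg)(0) = f(0)g(0) \neq 0$, forcing $f(0) \neq 0$. For $\supseteq$, the plan is to use that $f(0) \neq 0$ makes $f$ a unit in the formal power series ring $K[[X]]$. Writing $f^{-1} = \sum_{i\geq 0} c_iX^i \in K[[X]]$ and setting $g = \sum_{i=0}^{\mathsf{f}(S)} c_iX^i \in K[X]$, the product satisfies $fg \equiv 1 \pmod{X^{\mathsf{f}(S)+1}}$. By Lemma \ref{lemma:conductor} we have $\mathfrak{f}_S = [\mathsf{f}(S)+1,\infty)$, so $X^{\mathsf{f}(S)+1}K[X] \subseteq K[S]$, whence $fg \in 1 + K[S] \subseteq K[S]$ with $(fg)(0) = 1$; that is, $fg \in \mathsf{R}$.

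For (2), the essential ingredient to establish is the following saturation property: \emph{for every $h \in \mathsf{R}$, $hK[X]\cap K[S] = hK[S]$, and consequently $hK[X]\cap\mathsf{R} = h\mathsf{R}$ is a principal $v$-ideal of $\mathsf{R}$.} I would prove it by writing $h = \sum h_iX^i$ (with $h_i = 0$ for $i \in \Lambda$ and $h_0 \neq 0$) and taking $k = \sum k_iX^i \in K[X]$ with $hk \in K[S]$, and showing by induction on $i \in \Lambda$ that $k_i = 0$. The coefficient of $X^i$ in $hk$ equals $\sum_{j+l=i} h_jk_l$; the summand at $j = 0$ contributes $h_0k_i$, while any summand with $j > 0$ satisfies either $j \in \Lambda$ (so $h_j = 0$) or $j \in S$, in which case $i = j+l \in \Lambda$ forces $l \in \Lambda$ with $l < i$ and hence $k_l = 0$ by induction. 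Thus $h_0k_i = 0$ and $k_i = 0$.

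With the saturation property in hand, direction $(\Leftarrow)$ of (2) follows from multiplicativity of the iso $\psi\varphi\colon\mathcal{I}_\mathfrak{f}(K[X])\to\mathcal{I}_v^*(\mathsf{R})$: if $f/g = h_1/h_2$ with $h_i \in \mathsf{R}$, then $fh_2 = gh_1$ in $K[X]$ yields in $\mathcal{C}_v(\mathsf{R})$ the identity $[fK[X]\cap\mathsf{R}] + [h_2\mathsf{R}] = [gK[X]\cap\mathsf{R}] + [h_1\mathsf{R}]$, and the principal contributions $[h_i\mathsf{R}]$ vanish. For $(\Rightarrow)$, equality of classes produces $y \in \mathsf{q}(\mathsf{R})$ with $(fK[X]\cap\mathsf{R})\cdot_v(gK[X]\cap\mathsf{R})^{-1} = y\mathsf{R}$; writing $y = h_1/h_2$ with $h_i \in \mathsf{R}$ and using the saturation property to identify $y\mathsf{R} = \overline{\psi\varphi}(yK[X])$ for the extension $\overline{\psi\varphi}$ of $\psi\varphi$ to the groups of $v$-invertible fractional $v$-ideals, injectivity forces $(f/g)K[X] = yK[X]$ in the PID $K[X]$, so $f/g = \lambda y$ for some $\lambda \in K^\times \subseteq \mathsf{R}$ and hence $f/g \in \mathsf{q}(\mathsf{R})$. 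The main obstacle is the saturation property itself, which is precisely where the additive closure of $S \subseteq \N_0$ enters the argument; the rest is formal manipulation of the monoid isomorphism from the preliminaries, together with the fact that $K[X]$ is a PID.
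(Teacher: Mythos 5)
Your proof is correct and follows essentially the same route as the paper: the truncated power-series inverse in (1) is exactly the paper's recursive construction of $g$, and (2) rests, as in the paper, on the isomorphism $\mathcal{I}_\mathfrak{f}(K[X])\to\mathcal{I}_v^*(\mathsf{R})$ together with the identity $hK[X]\cap\mathsf{R}=h\mathsf{R}$ for $h\in\mathsf{R}$. The only difference is that you re-derive that identity by an explicit coefficient induction, whereas the paper obtains it immediately from the saturation of $\mathsf{R}$ in $K[X]\setminus\{0\}$ recorded in the preliminaries (which, combined with Lemma \ref{lemma:conductor}(3), already contains your inductive computation).
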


\begin{proof}
(1) Let $f \in K[X]$ with $fK[X] \in \mathcal{I}_\mathfrak{f}(K[X])$, i.e., $fK[X] \cap \mathsf{R} \neq \emptyset$. It follows from Lemma \ref{lemma:conductor}(3) that $fg \in K[S]$ for some $g \in K[X]$ with $(fg)(0) \neq 0$. Therefore $f(0) \neq 0$. \\
For the converse inclusion, let $f \in K[X]$ with $f(0) \neq 0$. Without loss of generality, we may assume that $f = 1 + \sum_{i = 1}^n k_i X^i$. We recursively construct $g = 1 + \sum_{j = 1}^{\max(\Lambda)} l_j X^j$ such that $fg \in \mathsf{R}$, that is, $fg \in K[S]$ and $(fg)(0) \neq 0$.\\
We set $l_1= -k_1$. If now $j >1$, we set $l_j = - \sum_{\substack{a+b = j \\ b \neq j}} k_al_b$. \\
We have $(fg)(0) = 1 \neq 0$. Now for $r \in \Lambda$, the coefficient of the monomial $X^r$ in $fg$ is $\sum_{a+b = r} k_al_b$, where $k_0 = l_0 = 1$. Since $l_r = - \sum_{\substack{a+b = r \\ b \neq r}} k_al_b$, we see that this coefficient equals $0$ and hence $fg \in K[S]$. \\
(2) Let $f,g \in K[X]$ with $fK[X],gK[X] \in \mathcal{I}_\mathfrak{f}(K[X])$. Assume first that $[fK[X] \cap \mathsf{R}] = [gK[X] \cap \mathsf{R}]$. It follows that there exists $a,b \in \mathsf{R}$ such that $a(fK[X] \cap \mathsf{R}) = b(gK[X] \cap \mathsf{R})$. As noted in the preliminaries, we have an isomorphism
\begin{align*}
\delta: \mathcal{I}_\mathfrak{f}(K[X]) &\to \mathcal{I}_v^*(\mathsf{R}), \\
I &\mapsto I \cap \mathsf{R}.
\end{align*}
Now $aK[X] \cap \mathsf{R} = a\mathsf{R}$ and $bK[X] \cap \mathsf{R} = b\mathsf{R}$, because $\mathsf{R}$ is a saturated submonoid of $K[X]\setminus \{0\}$. It follows that $afK[X] = \delta^{-1}(a(fK[X] \cap \mathsf{R})) = \delta^{-1}(b(gK[X] \cap \mathsf{R})) = bgK[X]$. So $af$ and $bg$ are associates in $K[X]$, whence there exists some $k \in K\setminus \{0\}$ such that $kaf = bg$. It follows that $\frac{f}{g} = \frac{ka}{b} \in \mathsf{q(R)}$, because $K\setminus \{0\} \subseteq \mathsf{R}$. \\
For the converse direction, assume that $\frac{f}{g} \in \mathsf{q(R)}$. Then there exist $a,b \in \mathsf{R}$ such that $\frac{a}{b} = \frac{f}{g}$. Now, by the same argument as above, we have $aK[X] \cap \mathsf{R} = a\mathsf{R}$ and $bK[X] \cap \mathsf{R} = b\mathsf{R}$. Therefore $b(fK[X] \cap \mathsf{R}) = \delta(bfK[X]) = \delta(agK[X]) = a(gK[X] \cap \mathsf{R})$.\\

\end{proof}

\begin{proposition}\label{proposition:classgroup1}
Let $K$ be a field, $m \in \N$, and let $S$ be the numerical monoid such that $\N_0 \setminus S = [1,m]$
. Denote $\mathsf{R} = \Reg(K[S])$.
\begin{enumerate}
\item We have $\Pic(K[S]) = \mathcal{C}_v(K[S])$ and there is an isomorphism $\phi: \mathcal{C}_v(K[S]) \to (\mathcal{C}, \otimes)$ sending $[fK[X] \cap \mathsf{R}]$ with $f = 1 + \sum_{i =1}^n k_iX^i$ to $ 1 + \sum_{i =1}^m k_iX^i$ (filling up with zeros if $n < m$).
\item The prime divisors in a class $c \in \mathcal{C}_v(K[S])$ are in one-to-one correspondence with the irreducible polynomials $f \in K[X]$ such that $f = 1+ \sum_{i =1}^n k_iX^i$ and $\phi(c) = 1+ \sum_{i =1}^m k_iX^i$ (filling up with zeros if $n<m$).
\item The following are equivalent.
	\begin{itemize}
	\item[(a)] Every divisor class of $K[S]$ contains (infinitely many) prime divisors.
	\item[(b)] For all $a_0,\ldots ,a_m\in K$ with $a_0\neq 0$ there exist (infinitely many) irreducible polynomials in $K[X]$ whose coefficient at the monomial $X^i$ equals $a_i$ for all $i\in[0,m]$.
	\end{itemize}
\end{enumerate}
\end{proposition}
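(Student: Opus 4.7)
The plan is to chain together the isomorphisms from the Preliminaries to reduce computing $\mathcal{C}_v(K[S])$ to tracking polynomials modulo $X^{m+1}$. By Lemma~\ref{lemma:conductor} we have $\widehat{K[S]} = K[X]$, and since $K[X]$ is a PID, every element of $\mathcal{I}_\mathfrak{f}(K[X])$ is principal, and by Lemma~\ref{lemma:general}(1) has the form $fK[X]$ with $f(0) \neq 0$. Composing the preliminary isomorphisms $\varphi$ and $\psi$, every $v$-invertible $v$-ideal of $\mathsf{R} = \Reg(K[S])$ has the form $fK[X] \cap \mathsf{R}$ for such an $f$, and via $\tilde\psi$ every divisor class of $K[S]$ arises as $[fK[X] \cap \mathsf{R}]$. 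The key observation is that $K[S] = K \oplus X^{m+1}K[X]$ as $K$-vector spaces (because $\N_0 \setminus S = [1,m]$), so every element of $\mathsf{R}$ reduces to its constant term modulo $X^{m+1}$.

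For part~(1), I would define $\phi([fK[X]\cap\mathsf{R}])$ to be the truncation of $f/f(0)$ modulo $X^{m+1}$, which lies in $\mathcal{C}$. Well-definedness follows from Lemma~\ref{lemma:general}(2): if $f/g \in \mathsf{q}(\mathsf{R})$, write $f/g = a/b$ with $a,b \in \mathsf{R}$; reducing $fb = ga$ modulo $X^{m+1}$ turns $a,b$ into their constants and yields $f \cdot b(0) \equiv g \cdot a(0) \pmod{X^{m+1}}$, and since constant terms satisfy $f(0)b(0) = g(0)a(0)$, the truncations of $f/f(0)$ and $g/g(0)$ agree. The homomorphism property is immediate from the definition of $\otimes$ as truncation of the ordinary polynomial product. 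Surjectivity is tautological, since any $1 + \sum_{i=1}^m k_iX^i \in \mathcal{C}$ is itself a polynomial in $K[X]$ with nonzero constant term. For injectivity, suppose $f,g$ have the same image with $f(0)=g(0)=1$; invoke Lemma~\ref{lemma:general}(1) to produce $g' \in K[X]$ with $gg' \in \mathsf{R}$ and $(gg')(0)=1$, i.e., $gg' \equiv 1 \pmod{X^{m+1}}$. Then $fg' \equiv f \equiv g \equiv gg' \equiv 1 \pmod{X^{m+1}}$, so $fg' - 1 \in X^{m+1}K[X] \subseteq K[S]$, giving $fg' \in \mathsf{R}$ and hence $f/g = (fg')/(gg') \in \mathsf{q}(\mathsf{R})$. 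For the equality $\Pic(K[S]) = \mathcal{C}_v(K[S])$, I would note that $K[S]$ is one-dimensional Noetherian (Lemma~\ref{lemma:conductor}), so every $v$-invertible $v$-ideal is already invertible: $v$-invertibility gives $(II^{-1})_v = K[S]$, but in a one-dimensional domain no proper $v$-ideal fails to be contained in a height-one prime, forcing $II^{-1} = K[S]$.

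Part~(2) then falls out of the same chain of identifications. By the bijection stated in the Preliminaries, prime divisors of $K[S]$ in a class $c$ correspond bijectively (via intersection with $\mathsf{R}$) to prime divisors in $\tilde\psi(c) \in \mathcal{C}_v(\mathsf{R})$, which in turn correspond (via $\varphi$) to height-one primes $fK[X] \in \mathcal{I}_\mathfrak{f}(K[X])$; these are exactly the principal ideals generated by irreducibles $f$ with $f(0)\neq 0$, normalizable to $f(0) = 1$. The description of $\phi$ in part~(1) then identifies such an $f = 1+\sum_{i=1}^n k_iX^i$ as belonging to class $c$ precisely when $\phi(c) = 1 + \sum_{i=1}^m k_iX^i$ (filling with zeros). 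Part~(3) is then essentially a rephrasing: condition~(a) asks that each element $1+\sum_{i=1}^m k_iX^i$ of $\mathcal{C}$ be hit by some (or infinitely many) irreducible polynomial with $f(0)=1$, and condition~(b) rescales to arbitrary nonzero constant term $a_0$ by multiplying by $a_0$ (which preserves irreducibility).

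The main obstacle I anticipate is being careful with the normalization $f \mapsto f/f(0)$ throughout, particularly in checking that the scalar $f(0)/g(0)$ appearing in the well-definedness computation cancels correctly, and making sure the statement of~(2) genuinely corresponds to the equivalence classes in $\mathcal{C}$ rather than to ordered tuples of coefficients. Everything else reduces to bookkeeping once the polynomial-truncation viewpoint and Lemmas~\ref{lemma:conductor} and~\ref{lemma:general} are in hand.
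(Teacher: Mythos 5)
Your argument is correct and follows essentially the same route as the paper: reduce to $\Reg(K[S])$ via the conductor isomorphisms $\varphi,\psi$, use Lemma \ref{lemma:general} for well-definedness and injectivity, and read everything off modulo $X^{m+1}$ using $K[S]=K\oplus X^{m+1}K[X]$. The only soft spot is your one-line justification of $\Pic(K[S])=\mathcal{C}_v(K[S])$ --- to pass from $(II^{-1})_v=K[S]$ to $II^{-1}=K[S]$ you also need that the maximal ideals of a one-dimensional Noetherian domain are divisorial --- but this is the standard fact the paper simply cites from [GHK, Proposition 2.10.5.1].
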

\begin{proof}
(1) The equality $\Pic(K[S]) = \mathcal{C}_v(K[S])$ follows from \cite[Proposition 2.10.5.1]{GHK} and the fact that $K[S]$ is one-dimensional Noetherian.\\
To see that $\phi$ is well-defined, let $f,g \in K[X]$ with $f = 1 + \sum_{i=1}^nk_iX^i$ and $g = 1 + \sum_{j = 1}^rl_jX^j$ such that $[fK[X] \cap \mathsf{R}] = [gK[X] \cap \mathsf{R}]$. By (2), we have that $\frac{f}{g} \in \mathsf{q(R)}$. It follows that $fb = ga$ for some $a,b \in \mathsf{R}$. Since $b(0) = (fb)(0) = (ga)(0) = a(0)$, the constant terms of $a$ and $b$ are equal and non-zero, because $a,b \in \mathsf{R}$. It follows that $1 + \sum_{i=1}^mk_iX^i = 1 + \sum_{j = 1}^ml_jX^j$.\\
It is clear by using the isomorphism $\delta$ from above that $\phi$ is a homomorphism, and it is surjective by definition.\\
The injectivity follows by the same construction as in the proof of Lemma \ref{lemma:general} (1). Namely, if $f,f' \in K[X]$ with $f(0) = f'(0) = 1$ and $\phi([fK[X] \cap \mathsf{R}]) = \phi([fK[X] \cap \mathsf{R}])$, then the coefficients of $f$ and $f'$ agree for indices up to $m$ and by the construction in Lemma \ref{lemma:general} (1), we get one single polynomial $g \in K[X] \setminus \{0\}$ with $fg,f'g \in \mathsf{R}$. Therefore $\frac{f}{f'} \in \mathsf{q(R)}$. \\
(2) This follows immediately from (1) and the fact that the prime divisors of $\mathsf{R}$ are exactly the ideals $fK[X]\cap\mathsf R$ for $f\in K[X]$ irreducible and $f(0)\neq 0$.\\
(3) This is obvious by (2).
\end{proof}

In the following we will make use of a known result. For the convenience of the reader we formulate it here.

\begin{remark}\label{remark:known}
\cite[Proposition 8.9.7]{GHK} Let $K$ be a one-dimensional function field over a finite field and let $\emptyset\neq M\subseteq \mathcal P(K)$ be a non-empty finite subset of the set of discrete rank 1 valuation domains of $K$. Moreover, let $A\subseteq R_M:=\bigcap_{V\in \mathcal P(K)\setminus M} V$ be an order in the holomorphy ring $R_M$ of $M$. Then every class of $\Pic(A)=\mathcal C_v(A)$ contains infinitely prime divisors.
\end{remark}

It is well-known, that in polynomial rings in one variable over finite fields there are irreducible polynomials with prescribed coefficients. We can reobtain this result as a consequence of the previous remark and Proposition \ref{proposition:classgroup1}.

\begin{corollary}\cite[Section 4.2]{Pollack}\label{corollary:irreducibles}
Let $K$ be a finite field and let $Y$ be an indeterminate over $K$. Moreover, let $a_0,\ldots ,a_m\in K$ with $a_0\neq 0$. Then for infinitely many tuples $(n,a_{m+1},\ldots ,a_n) \in \N_{\geq m} \times K^{n-m}$ the polynomial $\sum_{i=0}^n a_iY^i$ is irreducible.
\end{corollary}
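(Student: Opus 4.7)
The plan is to combine Proposition \ref{proposition:classgroup1}(3) with Remark \ref{remark:known}. Fix $a_0,\ldots,a_m\in K$ with $a_0\neq 0$, and let $S$ be the numerical monoid with $\N_0\setminus S=[1,m]$, so that $\mathsf{f}(S)=m$ and Proposition \ref{proposition:classgroup1} applies to $K[S]$. It therefore suffices to show that every class of $\Pic(K[S])=\mathcal{C}_v(K[S])$ contains infinitely many prime divisors, as Proposition \ref{proposition:classgroup1}(3)((a)$\Rightarrow$(b)) would then deliver infinitely many irreducible polynomials with the prescribed first $m+1$ coefficients.

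To invoke Remark \ref{remark:known}, I would realize $K[S]$ as an order in a holomorphy ring of a one-dimensional function field over the finite field $K$. By Lemma \ref{lemma:conductor}(1), $\widehat{K[S]}=K[Y]$, and since $Y^k\in K[S]$ for every $k\geq m+1$, the ring $K[Y]$ is generated as a $K[S]$-module by $1,Y,\ldots,Y^m$. Thus $K[S]$ is a finitely generated $K$-subalgebra of $K[Y]$ with the same quotient field $K(Y)$, i.e., an order in $K[Y]$. Moreover, $K[Y]$ is itself the holomorphy ring of the one-dimensional function field $K(Y)$ with respect to the choice $M=\{V_\infty\}$ consisting of the unique place at infinity, namely $K[Y]=\bigcap_{V\in\mathcal{P}(K(Y))\setminus M}V$. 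All hypotheses of Remark \ref{remark:known} are satisfied, so every class of $\Pic(K[S])$ contains infinitely many prime divisors.

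Applying Proposition \ref{proposition:classgroup1}(3), one obtains infinitely many pairwise distinct irreducible polynomials of the form $\sum_{i=0}^n a_iY^i\in K[Y]$ with the prescribed initial coefficients $a_0,\ldots,a_m$. Associating to each such polynomial the tuple $(n,a_{m+1},\ldots,a_n)\in\N_{\geq m}\times K^{n-m}$ with $n=\deg f$ (so $a_n\neq 0$), distinct polynomials give distinct tuples, yielding infinitely many such tuples as required. The only non-automatic step is the verification that $K[S]$ is an order in the holomorphy ring $K[Y]$; this is the single place where the explicit structure of the chosen $S$ enters, and it poses no real obstacle.
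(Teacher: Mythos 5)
Your proposal is correct and follows essentially the same route as the paper: reduce to the monoid $S$ with $\N_0\setminus S=[1,m]$, show $K[S]$ is an order in the holomorphy ring $K[Y]=\bigcap_{V\neq V_\infty}V$ of $K(Y)$, invoke Remark \ref{remark:known} to get infinitely many prime divisors in every class, and translate back via Proposition \ref{proposition:classgroup1}(3). The only differences are cosmetic (you exhibit the explicit module generators $1,Y,\ldots,Y^m$, where the paper simply asserts finite generation).
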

\begin{proof}
Let $a_0,\ldots ,a_m\in K$ with $a_0\neq 0$ and let $S$ be the numerical monoid with $\N_0\setminus S=[1,m]$. If we can show, that $K[S]$ has infinitely many prime divisors in every divisor class, we are done by Proposition \ref{proposition:classgroup1}. $K[X]$ is a holomorphy ring, as $K[X]=\bigcap_{\substack{f\in K[X]\\ f \text{ irreducible}}} K[X]_{(f)}$ is the intersection of all discrete rank 1 valuation domains of $K(X)$ except the one stemming from the valuation defined by $\frac{f}{g}\mapsto \deg(f)-\deg(g)$. Since the quotient fields of $K[S]$ and $K[X]$ coincide and $K[X]$ is a finitely generated $K[S]$-module, $K[S]$ is an order in $K[X]$ and the assertion follows from Remark \ref{remark:known}.
\end{proof}

\begin{proposition}\label{proposition:classgroup2}
Let $K$ be a field and let $S\subseteq T\subseteq \N_0$ be numerical monoids. Then there exists an epimorphism of class groups
\begin{align*}
\Theta:\mathcal C_v(K[S])&\to\mathcal C_v(K[T]),\\
[fK[X]\cap K[S]]&\mapsto [fK[X]\cap K[T]]
\end{align*}
that maps classes containing (infinitely many) prime divisors to classes containing (infinitely many) prime divisors and whose kernel consists of all those classes that contain $fK[X]\cap K[S]$ for some $f\in K[T]$ with $f(0)\neq 0$.
\end{proposition}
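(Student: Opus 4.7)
The plan is to lift the identity on $K[X]$-ideals to the class groups using the isomorphisms $\tilde\psi$ of the preliminaries, then identify the kernel via a formal power series computation. Combining $\tilde\psi$ with Lemma~\ref{lemma:general}(1), every class in $\mathcal{C}_v(K[S])$ (resp.\ $\mathcal{C}_v(K[T])$) admits a representative of the form $fK[X]\cap K[S]$ (resp.\ $fK[X]\cap K[T]$) with $f\in K[X]$, $f(0)\neq 0$. For well-definedness, if $[fK[X]\cap K[S]]=[gK[X]\cap K[S]]$ for two such $f,g$, Lemma~\ref{lemma:general}(2) applied to $S$ gives $f/g\in \mathsf{q}(\Reg(K[S]))$; since $K[S]\subseteq K[T]$ forces $\Reg(K[S])\subseteq\Reg(K[T])$, we obtain $f/g\in\mathsf{q}(\Reg(K[T]))$, and Lemma~\ref{lemma:general}(2) applied to $T$ yields $[fK[X]\cap K[T]]=[gK[X]\cap K[T]]$. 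Surjectivity is then immediate, multiplicativity follows from the product-preserving intersection map $\varphi$, and the prime-divisor statement follows because the same irreducible $p\in K[X]$ with $p(0)\neq 0$ realises a prime divisor in both the source class and its $\Theta$-image (as in Proposition~\ref{proposition:classgroup1}(2)).

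For the kernel, the inclusion ``$\supseteq$'' will be easy: if $f\in K[T]$ with $f(0)\neq 0$ then $f\in\Reg(K[T])$, and the defining property of $\Reg$ gives $fK[X]\cap K[T]=fK[T]$, which is principal, so the class lies in $\ker\Theta$. For the harder inclusion ``$\subseteq$'', the key claim to be established is
\[
\mathsf{q}(\Reg(K[T]))\cap K[X]\subseteq K[T].
\]
Once this is in place, any $C=[fK[X]\cap K[S]]\in\ker\Theta$ with $f\in K[X]$, $f(0)\neq 0$, satisfies (by Lemma~\ref{lemma:general}(2) applied to $T$ with $g=1$) $f\in\mathsf{q}(\Reg(K[T]))$, hence $f\in K[T]$, so $C$ itself admits a representative of the required form.

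The main obstacle will be establishing this key inclusion, which I would handle by a formal power series argument. Writing $f=h_1/h_2$ with $h_1,h_2\in\Reg(K[T])$, expand $1/h_2\in K[[X]]$ via the geometric series: with $u=1-h_2/h_2(0)\in X\cdot K[[X]]$ (supported in $T$ because $h_2\in K[T]$), one has $1/h_2=(1/h_2(0))\sum_{n\geq 0}u^n$. Since $T$ is closed under addition inside $\N_0$, each $u^n$ is supported in $nT\subseteq T$, so $1/h_2$ is supported in $T$ as a power series; multiplying by $h_1\in K[T]$ preserves support in $T$, so $f=h_1/h_2$ has support in $T$. Combined with $f\in K[X]$, this forces $f\in K[T]$. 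Everything else in the proof will then follow formally from Lemma~\ref{lemma:general} and the isomorphisms of the preliminaries.
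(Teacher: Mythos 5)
Your argument is correct, and its skeleton coincides with the paper's: both pass to $\Reg(K[S])$ and $\Reg(K[T])$ via the isomorphisms of \cite[Theorem 2.10.9]{GHK}, exploit that Lemma \ref{lemma:general}(1) gives the \emph{same} set of ideals $\{fK[X] \mid f(0)\neq 0\}$ over $S$ and over $T$, and use $\Reg(K[S])\subseteq\Reg(K[T])$ to see that principal ideals go to principal ideals. The genuine difference is in the kernel computation. The paper works at the level of the ideal monoids: the composite isomorphism $\eta:\mathcal I_v^*(\Reg(K[S]))\to\mathcal I_v^*(\Reg(K[T]))$ sends $fK[X]\cap\Reg(K[S])$ to $fK[X]\cap\Reg(K[T])$, so a class dies precisely when its image contains a principal ideal $g\Reg(K[T])=gK[X]\cap\Reg(K[T])$ with $g\in\Reg(K[T])$, and pulling that $g$ back along $\eta^{-1}$ immediately yields the representative $gK[X]\cap\Reg(K[S])$ with $g\in K[T]$, $g(0)\neq 0$ --- no further work needed. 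You instead fix the given representative $f$ and prove the auxiliary inclusion $\mathsf q(\Reg(K[T]))\cap K[X]\subseteq K[T]$ by expanding $1/h_2$ as a geometric series in $K[[X]]$ and tracking supports through the additive closure of $T$; this is a correct and self-contained argument (the convergence and support bookkeeping all check out, since $u\in X\cdot K[[X]]$ has support in $T\setminus\{0\}$ and $T+T\subseteq T$), and it isolates a reusable fact about $\Reg(K[T])$, but it is more machinery than the statement requires: the ideal-level isomorphism already hands you a representative in $K[T]$ for free. Your treatment of well-definedness, surjectivity, multiplicativity, and the transport of prime divisors matches the paper's.
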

\begin{proof}
It follows from \cite[Theorem 2.10.9.5]{GHK} and Lemma \ref{lemma:conductor} that $\eta:\mathcal I_v^*(\Reg(K[S]))\cong \mathcal I_{\mathfrak f_S}(K[X])=\mathcal I_{\mathfrak f_T}(K[X])\cong \mathcal I_v^*(\Reg(K[T]))$ with $\eta(fK[X]\cap \Reg(K[S]))=fK[X]\cap \Reg(K[T])$ is an isomorphism. Note, that $fK[X]\cap \Reg(K[S])$ is a principal ideal of $\Reg(K[S])$ if and only if $f\in \Reg(K[S])$ (and the same holds true, when exchanging $S$ with $T$). Since $\Reg(K[S])\subseteq \Reg(K[T])$ (Lemma \ref{lemma:conductor}), it follows that every prinicipal ideal of $\Reg(K[S])$ gets mapped to a principal ideal of $\Reg(K[T])$ by $\eta$. Now $\eta$ induces an epimorphism
\begin{align*}
\theta:\mathcal C_v(\Reg(K[S]))&\to\mathcal C_v(\Reg(K[T])),\\
[fK[X]\cap \Reg(K[S])]&\mapsto [fK[X]\cap \Reg(K[T])],
\end{align*}
whose kernel consists of those classes of $\mathcal C_v(\Reg(K[S]))$ containing $fK[X]\cap \Reg(K[S])$ for some $f\in \Reg(K[T])$. Since $\eta$ maps prime ideals to prime ideals, it follows that $\theta$ maps classes containing (infinitely many) prime divisors to classes containing (infinitely many) prime divisors. Now the assertion follows by \cite[Theorem 2.10.9.6]{GHK}.
\end{proof}

Let $K$ be a field and let $S$ be an affine monoid. Then the monoid algebra $K[S]$ is a Noetherian domain of finite Krull dimension. Moreover, $K[S]$ is a Krull domain if and only if $S$ is root closed and $K[S]$ is weakly Krull if and only if $S$ is a weakly Krull monoid. Numerical monoids are weakly Krull. By the class group of $K[S]$ we mean its $v$-class group.

\begin{theorem}\label{theorem:main1}
Let $K$ be a field and let $S$ be a numerical monoid with Frobenius number $\mathsf{f}(S)=\max(\N_0\setminus S)$. Then $K[S]$ has at least one prime divisor (resp. infinitely many) in all classes if for all $a_0,\ldots ,a_{\mathsf{f}(S)}\in K$ with $a_0\neq 0$ there exists at least one irreducible polynomial (resp. infinitely many) in $K[X]$ whose coefficient at the monomial $X^i$ equals $a_i$ for all $i\in[0,\mathsf{f}(S)]$.
\end{theorem}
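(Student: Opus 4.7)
The plan is to reduce Theorem \ref{theorem:main1} to the special case treated in Proposition \ref{proposition:classgroup1} by means of the epimorphism of class groups from Proposition \ref{proposition:classgroup2}. Set $m = \mathsf{f}(S)$, and let $T$ be the numerical monoid with $\N_0 \setminus T = [1,m]$, that is, $T = \{0\} \cup [m+1, \infty)$. Since $\max(\N_0 \setminus S) = m$ we have $[m+1,\infty) \subseteq S$, and together with $0 \in S$ this yields the inclusion $T \subseteq S$, which is exactly the situation of Proposition \ref{proposition:classgroup2}.

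First, I would invoke Proposition \ref{proposition:classgroup1} applied to the monoid $T$: its part (3) gives the equivalence between the hypothesis of Theorem \ref{theorem:main1} (which is literally condition (3)(b) for this $m$) and the statement that every divisor class of $K[T]$ contains at least one (resp.\ infinitely many) prime divisors. Thus, under the standing assumption of the theorem, every class of $\mathcal C_v(K[T])$ contains at least one (resp.\ infinitely many) prime divisors.

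Next, I would apply Proposition \ref{proposition:classgroup2} to the inclusion $T \subseteq S$ to obtain a surjective class group homomorphism $\Theta : \mathcal C_v(K[T]) \to \mathcal C_v(K[S])$ which maps classes containing at least one (resp.\ infinitely many) prime divisors to classes with the same property. Since $\Theta$ is surjective, every class of $\mathcal C_v(K[S])$ is the image of some class of $\mathcal C_v(K[T])$ and hence inherits the property of containing at least one (resp.\ infinitely many) prime divisors. This is the desired conclusion.

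There is no serious obstacle here: all the heavy lifting has already been carried out in Lemma \ref{lemma:general}, Proposition \ref{proposition:classgroup1}, and Proposition \ref{proposition:classgroup2}. The only conceptual point to get right is the direction of the inclusion: among numerical monoids with fixed Frobenius number $m$, the \emph{extreme} one $T=\{0\}\cup[m+1,\infty)$ is the smallest, and it is the class group of $K[T]$ (which is described explicitly by Proposition \ref{proposition:classgroup1}) that surjects onto that of $K[S]$ for every numerical monoid $S$ with $\mathsf{f}(S)=m$.
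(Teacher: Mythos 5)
Your proposal is correct and is essentially identical to the paper's proof, which also introduces the numerical monoid $T$ with $\N_0\setminus T=[1,\mathsf{f}(S)]$ and combines Propositions \ref{proposition:classgroup1} and \ref{proposition:classgroup2}; you have merely spelled out the details (the inclusion $T\subseteq S$, the direction of the epimorphism, and the use of surjectivity) that the paper leaves implicit.
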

\begin{proof}
Let $T$ be the numerical monoid with $\N_0\setminus T=[1,m]$ and apply Propositions \ref{proposition:classgroup1} and \ref{proposition:classgroup2}.
\end{proof}

We have already seen, that if $K$ is a finite field and $T$ is a numerical monoid, then $K[T]$ has prime divisors in all classes (it follows from the fact that $K[T]$ is an order in a holomorphy ring, see Remark \ref{remark:known} and the proof of Corollary \ref{corollary:irreducibles}). We give an alternative proof and extend the result to Hilbertian fields. Recall, that a field $K$ is said to be Hilbertian if every Hilbert set is non-empty, where a subset $H\subseteq K^n$ is called a Hilbert set if there exist polynomials $f_1,\ldots , f_l\in K[Y_1,\ldots ,Y_n,X_1,\ldots ,X_m]$ irreducible over $K(Y_1,\ldots ,Y_n)$ and a non-zero polynomial $g\in K[Y_1,\ldots ,Y_n]$ such that $H$ consists of those tuples $(y_1,\ldots ,y_n)\in K^n$ which satisfy $g(y_1,\ldots ,y_n)\neq 0$ and $f_j(y_1,\ldots ,y_n,X_1,\ldots ,X_m)$ are irreducible. The class of Hilbertian fields contains, for example, algebraic number fields and finitely generated transcendental extensions of arbitrary fields \cite[Theorem 13.4.2]{Fried}.

\begin{corollary}\label{corollary:main1}
Let $K$ be a Hilbertian field and let $S$ be a numerical monoid. Then $K[S]$ has infinitely many prime divisors in all classes.
\end{corollary}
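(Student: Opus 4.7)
The plan is to invoke Theorem~\ref{theorem:main1}, which immediately reduces the statement to the purely polynomial claim: for every prescription $a_0, \ldots, a_m \in K$ with $a_0 \neq 0$ (where $m = \mathsf{f}(S)$), there exist infinitely many irreducible polynomials in $K[X]$ whose coefficient at $X^i$ equals $a_i$ for all $i \in [0,m]$. I would fix such a tuple and focus entirely on producing these irreducibles; the Hilbertian hypothesis on $K$ is exactly what lets this work.

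For each integer $k \geq 2$ I will introduce auxiliary indeterminates $Y_1, \ldots, Y_{k-1}$ and consider
\[
g_k := X^{m+k} + \sum_{j=1}^{k-1} Y_j X^{m+j} + \sum_{i=0}^{m} a_i X^i \in K[Y_1, \ldots, Y_{k-1}][X].
\]
The first key step is to show that $g_k$ is irreducible over $K(Y_1, \ldots, Y_{k-1})$. Since $g_k$ has leading coefficient $1$ in $X$ it is primitive in $K[Y_1, \ldots, Y_{k-1}][X]$, so by Gauss's lemma it suffices to prove irreducibility in $K[Y_1, \ldots, Y_{k-1}, X]$. Viewing $g_k$ as linear in $Y_1$, with $Y_1$-coefficient $X^{m+1}$ and $Y_1$-constant term $h$ satisfying $h|_{X=0} = a_0 \neq 0$, I get $\gcd(X^{m+1}, h) = 1$ in $K[Y_2, \ldots, Y_{k-1}, X]$, which is the standard criterion for a polynomial linear in one variable over a UFD to be irreducible.

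Once irreducibility of $g_k$ over $K(Y_1, \ldots, Y_{k-1})$ is established, the Hilbertian hypothesis applied with the single polynomial $g_k$ (and trivial separating polynomial $\equiv 1$) delivers a tuple $(y_1, \ldots, y_{k-1}) \in K^{k-1}$ for which the specialization $g_k(y_1, \ldots, y_{k-1}, X) \in K[X]$ is irreducible. This specialization has degree exactly $m+k$ and retains the prescribed coefficients at degrees $0$ through $m$. Letting $k$ run through $\{2, 3, 4, \ldots\}$ produces irreducible polynomials of pairwise distinct degrees, hence infinitely many, and Theorem~\ref{theorem:main1} then concludes.

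The anticipated subtlety is that the paper's definition of Hilbertian guarantees only non-emptiness rather than infinitude of Hilbert sets; this is precisely why I vary the degree parameter $k$ instead of trying to extract infinitely many good specializations for a fixed $k$. With the varying-$k$ trick, distinct values automatically yield distinct polynomials, so we never need to appeal to the (true but unstated here) strengthening that Hilbert sets over Hilbertian fields are infinite.
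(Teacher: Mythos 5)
Your proof is correct and follows essentially the same route as the paper: reduce via Theorem~\ref{theorem:main1} to producing irreducibles with prescribed low-order coefficients, build a polynomial over a rational function field that is irreducible there, specialize using the Hilbertian property, and vary the degree to obtain infinitely many. The only cosmetic difference is the irreducibility certificate for the parametric polynomial — the paper takes $Y(a_0+\dots+a_nX^n)+X^m$ and applies Eisenstein at $(Y)$, while you make the polynomial monic with free middle coefficients and argue via linearity in $Y_1$ plus primitivity; both work.
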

\begin{proof}
By Theorem \ref{theorem:main1} it suffices to prove that $K[X]$ contains infinitely many irreducible polynomials for prescribed coefficients. If $K$ is finite, then this is \cite[Section 4.2]{Pollack}.\\
Let $K$ be a Hilbertian field, let $a_0,\ldots ,a_n\in K$ with $a_0\neq 0$ and let $m>n$. We set
\begin{align*}
f_m&=Y(a_0+a_1X+\ldots +a_nX^n)+X^m,\\
g&=Y.
\end{align*}
Then $f_m$ is irreducible over $K(Y)$ by Eisenstein's Criterion. Since every Hilbert set is non-empty, there exists $y\in K$ such that $y=g(y)\neq 0$ and $f_m(y,X)$ is irreducible over $K$, hence $a_0+a_1X+\ldots +a_nX^n+\frac{1}{y}X^m$ is irreducible over $K$.
\end{proof}

\begin{remark}\label{remark:examples}
Note that there are indeed affine monoid algebras which do not contain prime divisors in all classes. By Proposition \ref{proposition:classgroup1}, an example is $K[X^3,X^4,X^5]$ for $K$ an algebraically closed field. Observe that this ring is a non-principal order in the Dedekind domain $K[X]$, whence not Krull. This example can also be found in \cite[page 211]{Kain}. 
\end{remark}

\section{The higher-dimensional case} \label{section:higher-dim}

\begin{proposition}\label{proposition:use-kainrath}
Let $K$ be a field and let $S$ be an affine monoid with conductor $\mathfrak{f}_S = (S:\widehat{S})$. Assume that $S$ has a $v$-maximal ideal $\mathfrak{p}$ not containing $\mathfrak{f}_S$. Then the monoid algebra $K[S]$ has infinitely many prime divisors in each divisor class.
\end{proposition}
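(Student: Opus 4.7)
My plan is to combine the existence of the $\mathfrak p$-adic valuation with Kainrath's theorem, which handles the infinite-field case, and to adapt a one-dimensional Eisenstein-style construction in the spirit of Section~3 to cover the finite-field case.

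First, I would note that the hypothesis on $\mathfrak p$ forces $\mathrm{rank}(\mathsf q(S)) \ge 2$: an affine monoid of rank one is isomorphic either to $\Z$, which has no proper nonempty ideals, or to a numerical monoid, whose only $v$-maximal ideal $S \setminus \{0\}$ contains the conductor $[\mathsf f(S)+1,\infty)$; either way, no $v$-maximal ideal can avoid $\mathfrak f_S$. Consequently $\dim K[S] \ge 2$. Next I would extract the $\mathfrak p$-adic valuation: choosing $c \in \mathfrak f_S \setminus \mathfrak p$, the relation $c + \widehat S \subseteq S$ together with the fact that $c$ is a unit in $S_\mathfrak p$ yields $S_\mathfrak p = \widehat S_\mathfrak p$. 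Since $\widehat S$ is a normal affine, hence Krull, monoid, the localization $\widehat S_\mathfrak p$ is a discrete rank-one valuation monoid, giving the valuation $\mathsf v_\mathfrak p$ on $\mathsf q(S)$, which extends coefficientwise to a valuation $\tilde{\mathsf v}_\mathfrak p$ on the quotient field of $K[S]$.

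For $K$ infinite, Kainrath's theorem \cite{Kain} applies directly to $K[S]$, which is a finitely generated integral $K$-algebra of Krull dimension at least $2$ whose quotient field is separable over $K$ (automatic since $\mathsf q(S) \cong \Z^n$ is torsion-free), producing infinitely many prime divisors in every class. For $K$ finite I would use $\mathsf v_\mathfrak p$ together with an Eisenstein-type argument analogous to the one in Corollary~\ref{corollary:main1}: fixing a representative of a given class $g \in \mathcal C_v(K[S])$ (via the isomorphism $\mathcal C_v(K[S]) \cong \mathcal C_v(\Reg(K[S]))$) and twisting it by elements of $K[S]$ that are irreducible and Eisenstein with respect to $\tilde{\mathsf v}_\mathfrak p$ should yield infinitely many distinct prime divisors, all lying in the class $g$.

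The main obstacle is the finite-field case. Kainrath's argument is unavailable, and one must verify by an independent construction both that $\tilde{\mathsf v}_\mathfrak p$ really does isolate enough primes of $K[S]$ and that they land in the prescribed class. The hypothesis $\mathfrak p \not\supseteq \mathfrak f_S$ is essential: without it the localization $S_\mathfrak p$ need not be a valuation monoid, the $\mathfrak p$-adic valuation need not even be defined, and the entire strategy collapses.
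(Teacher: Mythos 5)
There is a genuine gap: the finite-field case, which is the entire point of this proposition (the infinite-field case is exactly Kainrath's original theorem, already quoted in the introduction), is left as an unexecuted sketch. You acknowledge this yourself ("the main obstacle is the finite-field case"). The proposed ``Eisenstein-type twisting'' with respect to $\tilde{\mathsf v}_\mathfrak p$ is not an argument yet: you would have to produce infinitely many elements of $\Reg(K[S])$ generating \emph{prime} $v$-ideals, verify via Lemma \ref{lemma:Reg}-type comparisons that they all lie in a prescribed class, and do this for every class — none of which follows from irreducibility with respect to a single valuation. A smaller but real error: the hypothesis does \emph{not} force $\mathrm{rank}\,\mathsf q(S)\ge 2$. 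For $S=\N_0$ one has $\mathfrak f_S=\N_0$ and the $v$-maximal ideal $[1,\infty)$ does not contain it, so the hypothesis holds in rank one; the paper dispatches this case separately by noting $K[S]$ is then a (Laurent) polynomial ring in one variable.

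The paper's resolution avoids any case distinction on $K$ and is worth internalizing: it invokes not Kainrath's infinite-field theorem but his Theorem 2, which applies to a finitely generated algebra over a one-dimensional, integrally closed base ring $\mathcal O$ with $\Pic(\mathcal O)=0$ whose quotient field is \emph{Hilbertian} and such that the extension of quotient fields is separable and regular. The hypothesis on $\mathfrak p$ is used only to manufacture such an $\mathcal O$ inside $K[S]$: since $S_\mathfrak p$ is a discrete valuation monoid, one picks $a\in S$ with $\mathsf v_\mathfrak p(a)=1$; then $\mathsf q(S)\cong\ker\mathsf v_\mathfrak p\oplus\Z$, so $a$ extends to a $\Z$-basis of $\mathsf q(S)$, and one sets $\mathcal O=K[a]\cong K[X]$. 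Its quotient field $K(a)$ is Hilbertian even when $K$ is finite, $K[S]$ is a finitely generated $\mathcal O$-algebra, the quotient field extension is purely transcendental, and $\widehat{K[S]}=K[\widehat S]$ is a finite module. So the valuation $\mathsf v_\mathfrak p$ enters only to split off a free direct summand meeting $S$ — not to run an Eisenstein construction. If you want to salvage your outline, replace the finite-field branch by this change of base ring.
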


\begin{proof}
Since $S$ is torsion-free, it is an additive submonoid of the group $(\Z^m,+)$ for some $m \in \N$. Therefore the quotient group $G$ of $S$ is a $\Z$-submodule of the free $\Z$-module $\Z^m$ and hence free, say of rank $n \in \N$. Note that $n\geq 2$ or $K[S]$ is isomorphic to a (Laurent) polynomial ring in one variable over $K$. In the second case, the conclusion of the proposition is well-known \cite[Theorem 45.5]{Gilmer_multiplicative}. So we may assume that $n \geq 2$. \\
\textbf{Claim:} There exists a $\Z$-basis $B$ of $G$ such that $B \cap S \neq \emptyset$. \\
If $S = G$, this is trivial. So let $S \neq G$. Let $\mathfrak{p}\subseteq S$ be a $v$-maximal ideal of $S$ not containing $\mathfrak{f}_S$. Then by \cite[Theorem 2.6.5.3]{GHK}, the localization $S_\mathfrak{p}$ is a disrete valuation monoid. Let $\mathsf{v}_\mathfrak{p}: G \to \Z$ be the associated $\mathfrak{p}$-adic valuation. Let $a \in S$ with $\mathsf{v}_\mathfrak{p}(a) = 1$. Let $G_0$ be the kernel of $\mathsf{v}_\mathfrak{p}$. Then $G_0$ is a free $\Z$-module of rank $n-1$ and we have $G \cong \ker \mathsf{v}_\mathfrak{p} \oplus \im \mathsf{v}_\mathfrak{p} \cong G_0 \oplus \Z$. So, if $a_1,\ldots,a_{n-1}$ is a $\Z$-basis of $G_0$, then $a_1,\ldots,a_{n-1},a$ is a $\Z$-basis of $G$ with $a \in S$, which proves the Claim. \\
Now let $a_1,\ldots,a_n \in G$ be a $\Z$-basis with $a_1 \in S$. We define $\mathcal{O} = K[a_1]$ with quotient field $F = K(a_1)$. By $L = K(a_1,\ldots,a_n)$ we denote the quotient field of $K[S]$. Then $K[S]$ is a finitely generated $\mathcal{O}$-algebra, $F$ is a Hilbertian field (as a finitely generated transcendental extension of a field), the Krull dimension of $\mathcal{O}$ equals $1$, $\mathcal{O}$ is integrally closed and $\widehat{K[S]} = K[\widehat{S}]$ is a finitely generated $K[S]$-module (because $\widehat{S}$ is a finitely generated fractional ideal of $S$, see \cite[Prop. 2.7.4.2]{GHK}), $L/F$ is a purely transcendental extension and hence separabel and regular, and $\Pic(\mathcal{O}) = 0$. Therefore by \cite[Theorem 2]{Kain}, $K[S]$ has infinitely many prime divisors in all classes and we are done.
\end{proof}

\begin{lemma}\label{lemma:structure}
Let $S$ be an affine monoid with its complete integral closure $\widehat{S}$ being factorial.
\begin{enumerate}
\item $\widehat{S} \cong \N_0^s \oplus \Z^t$ for some non-negative integers $s$ and $t$.
\item $\mathfrak{X}(S) =  \{\mathfrak{p}_i \mid i \in [1,s]\}$, where $\mathfrak{p}_i = \{\alpha \in S \mid \pi_i(\alpha) \neq 0 \}$ and $\pi_i$ denotes the projection to the $i$-th coordinate of $S \subseteq \widehat{S} \cong \N_0^s \oplus \Z^t$ for $i \in [1,s]$.
\end{enumerate}
\end{lemma}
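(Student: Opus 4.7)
For (1), I would apply the structure theorem for factorial monoids: any factorial monoid $M$ decomposes uniquely as $M \cong M^{\times} \oplus \N_0^{(P)}$, where $P$ indexes the associate classes of prime elements. Applied to $\widehat{S}$ and passed to quotient groups, this yields $\mathsf{q}(\widehat{S}) = \widehat{S}^{\times} \oplus \Z^{(P)}$. Since $S$ is affine, $\mathsf{q}(\widehat{S}) = \mathsf{q}(S)$ is finitely generated torsion-free of some rank $n$, so both summands are finitely generated, forcing $\widehat{S}^{\times} \cong \Z^t$ and $|P| = s$ with $s + t = n$, whence $\widehat{S} \cong \N_0^s \oplus \Z^t$.

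For (2), fix the identification $\widehat{S} = \N_0^s \oplus \Z^t$ supplied by (1), so that the height-one primes of the factorial monoid $\widehat{S}$ are $P_i := \{\alpha \in \widehat{S} : \pi_i(\alpha) > 0\}$ for $i \in [1,s]$ and $\mathfrak{p}_i = P_i \cap S$. That each $\mathfrak{p}_i$ is a prime ideal of $S$ follows immediately from the additivity of $\pi_i$ together with $\pi_i(S) \subseteq \N_0$; it is proper since $0 \notin \mathfrak{p}_i$, and non-empty because $S$ generates $\mathsf{q}(S) = \Z^{s+t}$ as a group, so the restriction of $\pi_i$ to $S$ cannot vanish identically. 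To identify $\mathfrak{X}(S)$ with $\{\mathfrak{p}_1, \ldots, \mathfrak{p}_s\}$, I plan to exploit the classical bijection between non-empty prime ideals of an affine monoid $S$ and proper faces of the rational polyhedral cone $C := \mathbb{R}_{\geq 0} S \subseteq \mathsf{q}(S) \otimes_{\Z} \mathbb{R}$, given by $F \mapsto \{\alpha \in S : \alpha \notin F\}$, under which the height of the prime equals the codimension of the face. A short limiting argument yields $C = \mathbb{R}_{\geq 0} \widehat{S} = \mathbb{R}_{\geq 0}^s \oplus \mathbb{R}^t$: any $x \in \widehat{S}$ comes with $c \in S$ satisfying $c + nx \in S$ for all $n$, so $x = \lim_n (c + nx)/n$ lies in the closed cone $C$. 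The facets of $\mathbb{R}_{\geq 0}^s \oplus \mathbb{R}^t$ are precisely $F_i := \{x \in C : \pi_i(x) = 0\}$ for $i \in [1,s]$, each corresponding under the bijection to $S \setminus F_i = \mathfrak{p}_i$, which completes the proof.

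The main obstacle is deploying the face--prime correspondence in the (possibly non-normal) affine setting, since $S$ need not equal $\widehat{S}$; this is classical (cf.\ Bruns--Gubeladze) but is not recorded in the paper's preliminaries. A self-contained alternative would argue via integrality of the extension $S \subseteq \widehat{S}$: a non-empty conductor $\mathfrak{f}_S$ (which exists because $\widehat{S}$ is finitely generated as a monoid) yields lying-over, so for every $\mathfrak{q} \in \mathfrak{X}(S)$ there is a prime $P_J$ of $\widehat{S}$ with $P_J \cap S = \mathfrak{q}$; contracting the chain $P_{j_1} \subsetneq P_{j_1} \cup P_{j_2} \subsetneq \cdots$ to $S$ and using distinctness of the $\mathfrak{p}_i$'s forces $|J| = 1$, hence $\mathfrak{q} = \mathfrak{p}_i$ for some $i$.
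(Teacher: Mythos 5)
Your argument is correct, but for part (2) it travels a genuinely different road than the paper. For (1) both proofs are essentially the same: you invoke the structure theorem $\widehat{S}\cong \widehat{S}^\times\oplus \N_0^{(P)}$ for factorial monoids and use finite generation of $\mathsf{q}(S)\cong\Z^n$ to force $P$ finite and $\widehat{S}^\times\cong\Z^t$; the paper instead quotes finite generation of $\widehat{S}$ itself and calls the conclusion clear. Your version has the small advantage of not needing finite generation of $\widehat{S}$ as a monoid for this step. For (2) the paper's mechanism is that $S\subseteq\widehat{S}$ is a \emph{root extension}, which gives an inclusion-preserving bijection $\spec(\widehat{S})\to\spec(S)$, $\mathfrak{q}\mapsto\mathfrak{q}\cap S$; this reduces everything to the factorial monoid $\N_0^s\oplus\Z^t$, whose height-one primes are visibly the complements of the coordinate hyperplane faces. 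You instead use the order-reversing, height-to-codimension correspondence between primes of an affine monoid and faces of the cone $\mathbb{R}_{\geq 0}S$, together with the observation that $\mathbb{R}_{\geq 0}S=\mathbb{R}_{\geq 0}\widehat{S}=\mathbb{R}_{\geq 0}^s\oplus\mathbb{R}^t$, whose facets are exactly the $F_i$. Both are sound; the polyhedral route buys geometric transparency (and the grading of the face lattice handles the height computation for free), at the cost of importing the Bruns--Gubeladze face--prime dictionary in the non-normal setting, which you correctly flag as the external input the paper avoids. Your fallback sketch via lying-over is in fact a disguised form of the paper's argument: the honest statement is not ``non-empty conductor implies lying-over'' but ``non-empty conductor (or finite generation of $\widehat S$ over $S$) implies $S\subseteq\widehat{S}$ is a root extension, and root extensions induce a spectrum bijection by contraction''; phrased that way, your chain argument showing $|J|=1$ becomes the paper's reduction, though to get the reverse inclusion $\{\mathfrak{p}_i\}\subseteq\mathfrak{X}(S)$ you would still need to note that contraction is injective on primes (again from the root-extension property), so that no non-empty prime sits strictly below $\mathfrak{p}_i$.
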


\begin{proof}
(1) $\widehat{S}$ is finitely generated by \cite[Theorem 2.7.14]{GHK}. That a finitely generated torsion free factorial monoid has the asserted form is clear. \\
(2) Since $S \subseteq \widehat{S}$ is a root extension \cite[Proposition 2.7.11]{GHK}, we have an inclusion preserving bijection $\spec(\widehat{S}) \to \spec(S), \mathfrak{q} \mapsto \mathfrak{q} \cap S$ \cite[Proposition 2.7]{Reinhart}. Thus, it suffices to prove the assertion for $S = \N_0^s \oplus \Z^t$. Since this monoid is factorial, $\mathfrak{X}(S) = \{p + S \mid p \in S \text{ is prime}\}$. The set of unit vectors $(\delta_{ij})_{j=1}^s$ for $i \in [1,s]$ clearly is a complete set of representatives of prime elements up to associates and the assertion follows.
\end{proof}

\begin{lemma}\label{lemma:monomials}
Let $K$ be a field and let $S$ be an affine monoid with complete integral closure $\widehat{S} =  \N_0^s \oplus \Z^t$ for some non-negative integers $s$ and $t$. Let $\mathfrak{f}_S=(S:\widehat{S})$ be the conductor of $S$ and denote $\mathsf{R} = \Reg(K[S])$.
\begin{enumerate}
\item $\widehat{K[S]} = K[\widehat{S}] = K[X_1,\ldots,X_s,Y_1,Y_1^{-1},\ldots,Y_t,Y_t^{-1}]$.
\item Let $f \in K[\widehat{S}]$ such that there exists $h \in K[\widehat{S}]$ with $fh \in \mathsf{R}$. Let $i \in [1,s]$ such that $\mathfrak{p}_i \supseteq \mathfrak{f}_S$ (cf. Lemma \ref{lemma:structure}(3)). Then $X_i$ does not divide $f$ in $K[\widehat{S}]$.
\end{enumerate}
\end{lemma}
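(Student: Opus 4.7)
My plan is to dispatch (1) with a short citation and then attack (2) by contradiction, showing that if $X_i \mid f$ then $fh$ must lie in a $v$-maximal prime of $K[S]$ containing the conductor, which forces $fh \notin \mathsf{R}$.

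For (1), the equality $\widehat{K[S]} = K[\widehat{S}]$ is \cite[Theorem 12.5]{Gil84} (already used in the proof of Lemma \ref{lemma:conductor}(1)), and writing out $\widehat{S} = \N_0^s \oplus \Z^t$ via $K[H \oplus M] \cong K[H][M]$ then yields the claimed Laurent-polynomial form.

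For (2), suppose toward a contradiction that $X_i$ divides $f$ in $K[\widehat{S}]$. Then $X_i$ divides $fh$, so every monomial of $fh$ has strictly positive $i$-th coordinate in its exponent. Because $fh \in \mathsf{R} \subseteq K[S]$, the support of $fh$ is contained in $\mathfrak{p}_i = \{\alpha \in S : \pi_i(\alpha) \neq 0\}$, which gives $fh \in K[\mathfrak{p}_i] = (X_i) \cap K[S]$, a prime ideal of $K[S]$. By Lemma \ref{lemma:conductor_general} the conductor satisfies $\mathfrak{f}_{K[S]} = K[\mathfrak{f}_S]$, and the hypothesis $\mathfrak{p}_i \supseteq \mathfrak{f}_S$ forces $K[\mathfrak{p}_i] \supseteq \mathfrak{f}_{K[S]}$.

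To finish I need $K[\mathfrak{p}_i]$ to be $v$-maximal in $K[S]$; once this is granted, the description of $\Reg(K[S])$ as the complement of the union of all $v$-maximal ideals of $K[S]$ containing the conductor (\cite[Theorem 2.10.9.2]{GHK}, the same ingredient invoked at the end of the proof of Lemma \ref{lemma:conductor}(3)) delivers $fh \notin \mathsf{R}$, the desired contradiction. The $v$-maximality follows from the fact that $(X_i)$ is a height-one (hence $v$-maximal) prime of the Krull domain $\widehat{K[S]} = K[\widehat{S}]$ containing the conductor, together with the bijection between $v$-maximal primes of $K[S]$ and those of $\widehat{K[S]}$ containing the conductor coming from \cite[Theorem 2.10.9]{GHK}; this bijection sends $(X_i)$ precisely to its contraction $K[\mathfrak{p}_i]$. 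The main obstacle is exactly this verification: in the one-dimensional setting of Lemma \ref{lemma:conductor}(3) it was immediate from $\vmax(K[S]) = \mathfrak{X}(K[S])$, but in the possibly higher-dimensional situation considered here one cannot shortcut via heights and must rely on the $v$-ideal correspondence above the conductor.
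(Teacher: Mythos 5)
Part (1) and the opening of part (2) are fine and follow the same line as the paper: assuming $X_i \mid f$ you correctly conclude $fh \in K[\mathfrak{p}_i] \supseteq K[\mathfrak{f}_S] = \mathfrak{f}_{K[S]}$, and the right goal is to place $fh$ in a $v$-maximal ideal of $K[S]$ containing the conductor, since by \cite[Theorem 2.10.9.2]{GHK} those are exactly the $v$-maximal ideals disjoint from $\mathsf{R}$. The gap is in how you finish. You insist on proving that $K[\mathfrak{p}_i]$ is itself $v$-maximal, justified by a ``bijection between $v$-maximal primes of $K[S]$ and those of $\widehat{K[S]}$ containing the conductor'' attributed to \cite[Theorem 2.10.9]{GHK}. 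No such bijection is available there: the correspondence $I \mapsto I \cap K[S]$ in that theorem is defined only on ($v$-invertible $v$-) ideals meeting $\Reg(K[S])$, i.e.\ on ideals \emph{not} contained in any $v$-maximal ideal containing the conductor, and $(X_i)$ with $\mathfrak{p}_i \supseteq \mathfrak{f}_S$ is precisely an ideal to which it does not apply --- indeed $(X_i) \cap \mathsf{R} = \emptyset$ is essentially what you are trying to prove. A bijection of the kind you invoke is false in general (already for orders in number fields, several height-one primes of the integral closure containing the conductor may contract to a single maximal ideal of the order). So the $v$-maximality of $K[\mathfrak{p}_i]$ is unsubstantiated; it is also unnecessary.

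The repair is short and is what the paper does: $K[\mathfrak{p}_i]$ is a proper $v$-ideal of $K[S]$ (this uses \cite[Lemma 2.3]{BIK}, since $\mathfrak{p}_i$ is a prime, hence $v$-, ideal of $S$; note that properness of the divisorial closure is itself a point to check, as a proper prime of a Noetherian domain can have trivial divisorial closure). Since $K[S]$ is Noetherian, hence Mori, $K[\mathfrak{p}_i]$ is contained in some $v$-maximal ideal $Q$ by \cite[Proposition 2.2.4.1]{GHK}; from $K[\mathfrak{p}_i] \supseteq \mathfrak{f}_{K[S]}$ we get $Q \supseteq \mathfrak{f}_{K[S]}$, and then $fh \in K[\mathfrak{p}_i] \subseteq Q$ contradicts $Q \cap \mathsf{R} = \emptyset$. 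With this substitution for your final step, the argument goes through.
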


\begin{proof}
(1) follows as in Lemma \ref{lemma:conductor}(1). \\
(2) Assume to the contrary that $X_i \mid f$ and let $g \in K[\widehat{S}]$ with $gX_i = f$. Since $\mathfrak{p}_i \supset \mathfrak{f}_S$, the ideal $K[\mathfrak{p}_i]$, which is a $v$-ideal by \cite[Lemma 2.3]{BIK} contains the conductor $\mathfrak{f} =K[\mathfrak{f}_S]$ of $K[S]$ (cf. Lemma \ref{lemma:conductor_general}). Now $K[S]$ is Noetherian \cite[Theorem 7.7]{Gil84}, whence Mori, so there exists a $v$-maximal ideal $Q$ of $K[S]$ such that $Q \supseteq K[\mathfrak{p}_i] \supseteq \mathfrak{f}$ \cite[Proposition 2.2.4.1]{GHK}.\\
By assumption, we have $X_igh = fh \in \mathsf{R} \subseteq K[S]$. It follows that $X_igh \in K[\mathfrak{p}_i]$ which is a contradiction to $\mathsf{R} \cap Q = \emptyset$ \cite[Theorem 2.10.9.2]{GHK}.
\end{proof}

\begin{lemma}\label{lemma:Reg}
Let $K$ be a field and let $S$ be an affine monoid with its complete integral closure being factorial. Denote $\mathsf{R} = \Reg(K[S])$. Let $f,g \in K[\widehat{S}]$ such that $fK[\widehat{S}] \cap \mathsf{R} \neq \emptyset$. 
\begin{enumerate}
\item If the coeffients of monomials of $f$ and $g$ having exponents in $S\setminus \mathfrak{f}_S$ are the same, then $\frac{f}{g} \in \mathsf{q}(\mathsf{R})$.
\item If $\frac{f}{g} \in \mathsf{q(R)}$, then $gK[\widehat{S}] \cap \mathsf{R} \neq \emptyset$ and $[fK[\widehat{S}] \cap \mathsf{R}] = [gK[\widehat{S}] \cap \mathsf{R}]$.
\end{enumerate}
\end{lemma}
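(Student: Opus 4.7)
The plan is to prove~(2) by a direct manipulation using the monoid isomorphism $\delta:\mathcal{I}_{\mathfrak{f}}(K[\widehat S])\to\mathcal{I}_v^*(\mathsf R)$ from the preliminaries, and to prove~(1) by reducing the coefficient-agreement hypothesis to the observation that $f-g$ lies in the conductor $\mathfrak f$.

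For~(2), I would write $f/g = p/q$ with $p,q\in\mathsf R$, so that $fq = gp$ in $K[\widehat S]$. Taking $h$ from the standing hypothesis (so $fh\in\mathsf R$), the product $g(ph) = (gp)h = (fq)h = q(fh)$ lies in $\mathsf R$ because $\mathsf R$ is multiplicatively closed and $q,fh\in\mathsf R$; this establishes $gK[\widehat S]\cap\mathsf R\neq\emptyset$. For the class equality, the identity $fqK[\widehat S] = gpK[\widehat S]$ of principal ideals of $K[\widehat S]$, together with the multiplicativity of $\delta$ and the equality $\delta(cK[\widehat S]) = c\mathsf R$ for $c\in\mathsf R$ (a consequence of the saturation of $\mathsf R$ in $K[\widehat S]\setminus\{0\}$ noted in the preliminaries), yields
\[
(fK[\widehat S]\cap\mathsf R)\cdot_v q\mathsf R \;=\; (gK[\widehat S]\cap\mathsf R)\cdot_v p\mathsf R
\]
in $\mathcal I_v^*(\mathsf R)$; passing to $\mathcal C_v(\mathsf R)$, where $p\mathsf R$ and $q\mathsf R$ are trivial, gives the desired class equality.

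For~(1), the key observation is that the coefficient-agreement hypothesis forces the support of $f-g$ to lie in $\mathfrak f_S$, so $f-g\in K[\mathfrak f_S] = \mathfrak f$ by Lemma~\ref{lemma:conductor_general}. Since $\mathfrak f$ is also an ideal of $K[\widehat S]$, the product $(f-g)h$ lies in $\mathfrak f$ as well, and therefore $gh = fh - (f-g)h$ differs from the $\mathsf R$-element $fh$ only by a conductor element. Combined with the stability $\mathsf R+\mathfrak f\subseteq\mathsf R$ — any $v$-maximal ideal $Q$ of $K[S]$ containing $\mathfrak f$ is disjoint from $\mathsf R$, and for $r\in\mathsf R$, $x\in\mathfrak f\subseteq Q$ we have $r+x\equiv r\not\equiv 0\pmod Q$, keeping $r+x$ outside each such $Q$ — this yields $gh\in\mathsf R$, whence $f/g = (fh)/(gh)\in\mathsf q(\mathsf R)$.

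The main routine verifications are the compatibility of $\delta$ with $v$-products of principal ideals used in~(2) and the stability $\mathsf R+\mathfrak f\subseteq\mathsf R$ used in~(1); both are where the conductor condition and the saturation property of $\mathsf R$ enter the argument essentially, and once they are in place the lemma is a formal consequence of the general theory of $v$-ideals recalled in Section~\ref{section:preliminaries}.
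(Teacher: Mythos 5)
Your proposal is correct and follows essentially the same route as the paper: part (2) is verbatim the paper's argument (write $fq=gp$ with $p,q\in\mathsf R$, multiply by $h$ to get $q(fh)\in gK[\widehat S]\cap\mathsf R$, then push the principal-ideal identity through the isomorphism $\delta$), and part (1) rests on the same key observation that $f-g$ lies in the conductor $\mathfrak f=K[\mathfrak f_S]$, which is an ideal of $K[\widehat S]$. The only difference is how you conclude $gh\in\mathsf R$ from $gh=fh-(f-g)h$: the paper verifies the definition of $\Reg$ directly (via the intermediate element $hf_1$), whereas you derive the stability $\mathsf R+\mathfrak f\subseteq\mathsf R$ from the characterization $\mathsf R=K[S]\setminus\bigcup\{Q\in\vmax(K[S]):Q\supseteq\mathfrak f\}$ — both are valid, and both proofs share the same implicit reading of the hypothesis, namely that agreement of coefficients on $S\setminus\mathfrak f_S$ forces $\operatorname{supp}(f-g)\subseteq\mathfrak f_S$ (strictly this also requires $\operatorname{supp}(f-g)\subseteq S$, which is what actually holds in the application in Proposition \ref{proposition:factorial}).
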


\begin{proof}
(1) Let $f = f_1 + f_2$ where $f_1 \in K[S\setminus \mathfrak{f}_S]$ and $f_2  \in K[\mathfrak{f}_S]$ and $g = f_1 + g_2$ where $g_2 \in K[\mathfrak{f}_S]$. Since $fK[\widehat{S}] \cap \mathsf{R} \neq \emptyset$, we can pick $h \in K[\widehat{S}]$ with $hf \in \mathsf{R}$. Then, we have $hf_1 = hf - hf_2 \in K[S]$. \\
First, we want to show that $hf_1 \in \mathsf{R}$. Let $z \in K[\widehat{S}]$ such that $zhf_1 \in K[S]$. Then $zhf = zhf_1 + zhf_2 \in K[S]$ and since $hf \in \mathsf{R}$, it follows that $z \in K[S]$.\\
Now, it suffices to prove that $hg \in \mathsf{R}$. For this, note that $hg = hf_1 + hg_2 \in K[S]$ and let $z \in K[\widehat{S}]$ such that $zhg \in K[S]$. Then $zhf_1 = zhg - zhg_2 \in K[S]$. Since $hf_1 \in \mathsf{R}$, it follows that $z \in K[S]$.\\
(2) Since $\frac{f}{g} \in \mathsf{q(R)}$, we can pick $a,b \in \mathsf{R}$ such that $fb = ga$. Now, since $fK[\widehat{S}] \cap \mathsf{R} \neq \emptyset$, there exists $h \in K[\widehat{S}]$ such that $hf \in \mathsf{R}$. Therefore $gha = fhb \in \mathsf{R}$ and hence $gK[\widehat{S}] \cap \mathsf{R} \neq \emptyset$.\\
As noted in the preliminaries, we have an isomorphism
\begin{align*}
\delta: \mathcal{I}_\mathfrak{f}(K[\widehat S]) &\to \mathcal{I}_v^*(\mathsf{R}), \\
I &\mapsto I \cap \mathsf{R}.
\end{align*}
Now $aK[\widehat S] \cap \mathsf{R} = a\mathsf{R}$ and $bK[\widehat S] \cap \mathsf{R} = b\mathsf{R}$, because $\mathsf{R}$ is a saturated submonoid of $K[\widehat S]\setminus\{0\}$. So we have $b(fK[\widehat S] \cap \mathsf{R}) = \delta(bfK[\widehat S]) = \delta(agK[\widehat S]) = a(gK[\widehat S] \cap \mathsf{R})$.
\end{proof}

\begin{proposition}\label{proposition:factorial}
Let $K$ be a field and let $S$ be an affine monoid with factorial complete integral closure $\widehat{S}$, conductor $\mathfrak{f}_S$ and quotient group $G$ of rank at least $2$. If each height-one prime ideal of $S$ contains $\mathfrak{f}_S$, then the monoid algebra $K[S]$ has infinitely many prime divisors in each divisor class.
\end{proposition}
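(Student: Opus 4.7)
The plan is to mimic the strategy of Proposition \ref{proposition:use-kainrath} and apply Kainrath's \cite[Theorem 2]{Kain}. That argument required a $v$-maximal ideal of $S$ not containing $\mathfrak{f}_S$, whose associated valuation produced an element of $S$ lying in a $\Z$-basis of $G$. Under the present hypothesis this route is unavailable, so I need a different construction of such a basis element, which I will build using the explicit factorial structure of $\widehat{S}$.

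By Lemma \ref{lemma:structure}(1), $\widehat{S} \cong \N_0^s \oplus \Z^t$ with $n := s + t \geq 2$, and I identify $G$ with $\Z^n$. Since $\widehat{S}$ is a finitely generated fractional ideal of $S$ by \cite[Prop.~2.7.4.2]{GHK}, the conductor $\mathfrak{f}_S$ is nonempty.

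The key step is to find an element of $\mathfrak{f}_S \subseteq S$ that is primitive in $\Z^n$. Fix a nonzero $x = (x_1,\ldots,x_n) \in \mathfrak{f}_S$. Since $n \geq 2$, one may choose $j \in [1,n]$ so that $D := \gcd\{x_k : k \neq j\} \geq 1$; in the degenerate case where $x$ is supported on a single coordinate, replace $j$ by any other index and take $c := 1$ in what follows. The set $\mathfrak{f}_S$ is an ideal of $\widehat{S}$, and $e_j \in \widehat{S}$, so $x + c\,e_j \in \mathfrak{f}_S \subseteq S$ for every $c \in \N_0$. Choosing $c \geq 0$ with $x_j + c \equiv 1 \pmod{D}$ forces $\gcd(x_j + c, D) = 1$, so $a_1 := x + c\,e_j$ is primitive in $\Z^n$ and extends to a $\Z$-basis $a_1,\ldots,a_n$ of $G$.

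With this basis, the conclusion follows verbatim from the endgame of Proposition \ref{proposition:use-kainrath}: setting $\mathcal{O} := K[a_1]$ and $F := K(a_1)$, the ring $\mathcal{O}$ is a one-dimensional integrally closed polynomial ring with $\Pic(\mathcal{O}) = 0$, $F$ is Hilbertian, $L := q(K[S]) = K(a_1,\ldots,a_n)$ is purely transcendental over $F$, the algebra $K[S]$ is finitely generated over $\mathcal{O}$, and $\widehat{K[S]} = K[\widehat{S}]$ is finitely generated as a $K[S]$-module. Then \cite[Theorem 2]{Kain} delivers infinitely many prime divisors in every divisor class. The main obstacle is the primitive-element construction: this is where both the factoriality of $\widehat{S}$ (giving the coordinate description of $\widehat{S}$ and the availability of the unit vectors $e_j \in \widehat{S}$) and the rank assumption $n \geq 2$ (so that one coordinate can be shifted independently of the others) are essential.
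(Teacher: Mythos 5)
Your proof is correct, but it takes a genuinely different route from the paper's. The paper proves Proposition \ref{proposition:factorial} by a direct construction: given a representative $f$ of a class of $\mathcal{I}_v^*(\Reg(K[S]))$, it produces infinitely many irreducibles $g\in K[\widehat{S}]$ agreeing with $f$ at all monomials with exponents in $S\setminus\mathfrak{f}_S$, using Hilbertianity of $K[X_2,\dots,X_s,Y_1,\dots,Y_t]$, Gauss's lemma, and Lemmas \ref{lemma:monomials} and \ref{lemma:Reg}; the hypothesis that every height-one prime contains $\mathfrak{f}_S$ is what makes that proposition complementary to Proposition \ref{proposition:use-kainrath}. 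You instead observe that the only role of the hypothesis in Proposition \ref{proposition:use-kainrath} is to establish the Claim that some $\Z$-basis of $G$ meets $S$, and that when $\widehat{S}\cong\N_0^s\oplus\Z^t$ this Claim holds unconditionally: the conductor is a non-empty ideal of $\widehat{S}$ contained in $S$, all unit vectors lie in $\widehat{S}$, and shifting a fixed nonzero $x\in\mathfrak{f}_S$ by $c\,e_j$ produces an element of $\mathfrak{f}_S\subseteq S$ whose coordinate gcd is $1$, hence a primitive vector extending to a $\Z$-basis of $G=\Z^{s+t}$. I checked the arithmetic (choosing $j$ with $D=\gcd\{x_k:k\neq j\}\geq 1$ is possible since $x\neq 0$ and $n\geq 2$, and $\gcd(D,x_j+c)=1$ once $x_j+c\equiv 1\pmod D$), and the endgame of Proposition \ref{proposition:use-kainrath} indeed uses nothing beyond the existence of such a basis element, so \cite[Theorem 2]{Kain} applies verbatim. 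Note that your argument never invokes the assumption that each height-one prime contains $\mathfrak{f}_S$; it therefore proves the concluding Theorem of Section \ref{section:higher-dim} in one stroke and renders the dichotomy between the two propositions unnecessary. What the paper's longer approach buys in exchange is an explicit description, parallel to the one-dimensional case, of which irreducibles of $K[\widehat S]$ represent a given class, whereas your argument is shorter but delegates all the work to Kainrath's theorem.
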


\begin{proof}
Without loss of generality, we may assume that $\widehat{S} = \N_0^s \oplus \Z^t$ for some non-negative integers $s$ and $t$ with $s + t \geq 2$ (see Lemma \ref{lemma:structure}). By \cite[Theorem 2.10.9.6]{GHK}, it suffices to prove the assertion for $\mathsf{R} = \Reg(K[S])$.\\
Let $I \in \mathcal{I}_v^*(\mathsf{R})$ and $f \in K[\widehat{S}] = K[X_1,\ldots,X_s,Y_1,Y_1^{-1},\ldots,Y_t,Y_t^{-1}]$ such that $fK[\widehat{S}] \cap \mathsf{R} = I$ \cite[Theorem 2.10.9.5]{GHK}. It suffices to construct infinitely many irreducible elements $g \in K[\widehat{S}]$ whose coefficients at monomials with exponents in $S \setminus \mathfrak{f}_S$ equal those of $f$. Then, by Lemma \ref{lemma:Reg}, $\frac{f}{g} \in \mathsf{q(R)}$ and hence $[I] = [gK[\widehat{S}] \cap \mathsf{R}]$. \\
We write $f = f_0 + f_1 X_1 + \ldots + f_n  X_1^n$ with $n \in  \N_0$ and $f_0, \ldots,f_n \in K[X_2,\ldots,X_s,Y_1,Y_1^{-1},\ldots,Y_t,Y_t^{-1}]$. Since $Y_1, \ldots,Y_t$ are units in $K[X_1,\ldots,X_s,Y_1,Y_1^{-1},\ldots,Y_t,Y_t^{-1}]$, we assume without loss of generality that $f_0, \ldots,f_n \in K[X_2,\ldots,X_s,Y_1,\ldots,Y_t]$. Let $\deg(f) < m \in \N_0$ such that $X_1^m \ldots X_s^m \in K[\mathfrak{f}_S]$ and $\text{char}(K)$ does not divide $m+1$. This is possible, because $\mathfrak{f}_S$ is a non-empty ideal of $\widehat{S}$ (see \cite[Theorem 2.7.13]{GHK}). We set 
\begin{align*}
G(T) = f + X_1^m \ldots X_s^m + T \cdot X_1^{m+1} X_2^m \ldots X_s^m,
\end{align*}
where $T$ is an indeterminate over $K(X_1,\ldots,X_s,Y_1,\ldots,Y_t)$. Since $G(T)$ is linear, it is irreducible in $K(X_1,\ldots,X_s,Y_1,\ldots,Y_t)[T]$ and by Lemma \ref{lemma:monomials}(2) it is primitive over $K[X_1,\ldots,X_s,Y_1,\ldots,Y_t]$. By the Lemma of Gauss, it follows that $G(T)$ is irreducible in $K[X_1,\ldots,X_s,Y_1,\ldots,Y_t][T]$ which is isomorphic to  $K[X_2,\ldots,X_s,Y_1,\ldots,Y_t,T][X_1]$ and hence over $K(X_2,\ldots,X_s,Y_1,\ldots,Y_t,T)$. \\
Since $K[X_2,\ldots,X_s,Y_1,\ldots,Y_t]$ is a Hilbertian ring \cite[Proposition 13.4.1]{Fried} and $G(T)$ is seperable over $K(X_2,\ldots,X_s,Y_1,\ldots,Y_t,T)$ ($\text{char}(K)$ does not divide $m+1$ and $G(T)$ is irreducible), by definition of a Hilbertian ring,  there exists some $a \in K[X_2,\ldots,X_s,Y_1,\ldots,Y_t]$ such that $g:=G(a)$ is irreducible over $K(X_2,\ldots,X_s,Y_1,\ldots,Y_t)$ \cite[Section 13.4]{Fried}. Again by Lemma \ref{lemma:monomials}(2), $g$ is a primitive polynomial over $K[X_2,\ldots,X_s,Y_1,\ldots,Y_t]$ and by the Lemma of Gauss $g$ is irreducible in $K[X_2,\ldots,X_s,Y_1,\ldots,Y_t][X_1] \cong K[X_1,\ldots,X_s,Y_1,\ldots,Y_t]$ and whence in $K[\widehat{S}]$. Moreover, the coefficients of $g$ coincide with those of $f$ at monomials with exponents in $S \setminus \mathfrak{f}_S$, because $X_1^m \ldots X_s^m  \in K[\mathfrak{f}_S]$ and $X_1^{m+1} X_2^m \ldots X_s^m  \in  K[\mathfrak{f}_S]$.
\end{proof}

\begin{theorem}
Let $K$ be a field and let $S$ be an affine monoid with factorial complete integral closure and quotient group of rank at least $2$. Then $K[S]$ has infinitely many prime divisors in each divisor class.
\end{theorem}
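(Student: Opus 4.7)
The approach is a case distinction on whether the conductor $\mathfrak f_S=(S:\widehat S)$ is contained in every height-one prime ideal of $S$. Each branch falls directly under one of the two results already established in this section, so no new heavy machinery is needed.

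First I would invoke Lemma~\ref{lemma:structure} to write $\widehat S\cong \N_0^s\oplus \Z^t$, where the rank hypothesis on $\mathsf q(S)$ forces $s+t\geq 2$, and to record that $\mathfrak X(S)=\{\mathfrak p_1,\ldots,\mathfrak p_s\}$ is finite. If every $\mathfrak p_i$ contains $\mathfrak f_S$, then the hypotheses of Proposition~\ref{proposition:factorial} are met verbatim and the conclusion follows at once.

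Otherwise, some $\mathfrak p_i$ does not contain $\mathfrak f_S$, and the plan is to apply Proposition~\ref{proposition:use-kainrath} with $\mathfrak p=\mathfrak p_i$. For this I need to verify that $\mathfrak p_i$ is actually $v$-maximal in $S$. Since $S$ is affine, it is finitely generated and hence Mori; in a Mori monoid every height-one prime is a divisorial ideal, so $\mathfrak p_i$ is a $v$-ideal and by \cite[Proposition~2.2.4.1]{GHK} is contained in some $v$-maximal ideal $\mathfrak q$ of $S$. By \cite[Theorem~2.6.5.3]{GHK}, the localization $S_{\mathfrak q}$ is a discrete valuation monoid, hence $\mathfrak q$ has height one in $S$, forcing $\mathfrak q=\mathfrak p_i$. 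Thus $\mathfrak p_i$ is $v$-maximal and Proposition~\ref{proposition:use-kainrath} concludes.

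The only real obstacle I anticipate is making the identification between height-one primes and $v$-maximal ideals precise in the Mori-monoid setting; once this is in place, the theorem reduces to the clean two-line case split described above.
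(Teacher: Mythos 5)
Your case split is exactly the one the paper uses: Proposition~\ref{proposition:factorial} when every height-one prime of $S$ contains $\mathfrak{f}_S$, and Proposition~\ref{proposition:use-kainrath} otherwise. The first branch is fine. The problem is in your verification that a height-one prime $\mathfrak{p}_i$ with $\mathfrak{f}_S \not\subseteq \mathfrak{p}_i$ is $v$-maximal. You embed $\mathfrak{p}_i$ into some $v$-maximal ideal $\mathfrak{q}$ and then invoke \cite[Theorem~2.6.5.3]{GHK} to conclude that $S_{\mathfrak{q}}$ is a discrete valuation monoid; but that theorem applies only to $v$-maximal ideals \emph{not containing the conductor}, and you have not shown $\mathfrak{f}_S \not\subseteq \mathfrak{q}$. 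The inclusion $\mathfrak{p}_i \subseteq \mathfrak{q}$ runs in the wrong direction here: $\mathfrak{f}_S \not\subseteq \mathfrak{p}_i$ does not prevent the larger ideal $\mathfrak{q}$ from containing $\mathfrak{f}_S$, and in a Mori monoid a divisorial height-one prime can well sit inside a $v$-maximal ideal of larger height that does contain the conductor. So as written this step does not go through.

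The gap is local and repairable with the same citation, used correctly. By Lemma~\ref{lemma:structure}(2), $\mathfrak{p}_i = \mathfrak{P}_i \cap S$ for a height-one prime $\mathfrak{P}_i$ of $\widehat{S}$, and since $\mathfrak{f}_S \subseteq S$, the condition $\mathfrak{f}_S \not\subseteq \mathfrak{p}_i$ forces $\mathfrak{f}_S \not\subseteq \mathfrak{P}_i$. Now \cite[Theorem~2.6.5.3]{GHK} gives a bijection between the height-one primes of $\widehat{S}$ not containing $\mathfrak{f}_S$ and the $v$-maximal ideals of $S$ not containing $\mathfrak{f}_S$, via $\mathfrak{P} \mapsto \mathfrak{P} \cap S$; applied to $\mathfrak{P}_i$ this exhibits $\mathfrak{p}_i$ itself as a $v$-maximal ideal of $S$ avoiding $\mathfrak{f}_S$ (equivalently, one can note that $S_{\mathfrak{p}_i} = \widehat{S}_{\mathfrak{P}_i}$ is a discrete valuation monoid because some element of $\mathfrak{f}_S$ is invertible in the localization). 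With that substitution your argument coincides with the paper's proof.
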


\begin{proof}
This follows by combining Propositions \ref{proposition:use-kainrath} and \ref{proposition:factorial}. To use Proposition \ref{proposition:use-kainrath}, note that every $v$-maximal ideal of $S$ not containing the conductor $\mathfrak{f}_S$ is of height one \cite[Theorem 2.6.5.3]{GHK}.
\end{proof}

\providecommand{\bysame}{\leavevmode\hbox to3em{\hrulefill}\thinspace}
\providecommand{\MR}{\relax\ifhmode\unskip\space\fi MR }
\providecommand{\MRhref}[2]{%
  \href{http://www.ams.org/mathscinet-getitem?mr=#1}{#2}
}
\providecommand{\href}[2]{#2}

\end{document}